\title{Combinatorial Methods for Barcode Analysis}
\author{Edgar Jaramillo-Rodriguez$^1$}
\date{$^1$ Department of Mathematics, University of California, Davis. \\ E-mail: ejaramillo@ucdavis.edu}
\renewcommand{\epsilon}{\varepsilon}
\DeclareMathOperator{\conv}{conv}
\DeclareMathOperator{\inv}{inv}
\DeclareMathOperator{\invm}{invm}
\DeclareMathOperator{\cross}{cross\#}
\DeclareMathOperator{\image}{Im}
\DeclareMathOperator{\Id}{Id}
\newtheorem*{rep@theorem}{\rep@title}
\newcommand{\newreptheorem}[2]{%
\newenvironment{rep#1}[1]{%
 \def\rep@title{#2 \ref{##1}}%
 \begin{rep@theorem}}%
 {\end{rep@theorem}}}
\theoremstyle{plain}
\newtheorem{theorem}{Theorem}
\newtheorem{lemma}{Lemma}
\newtheorem{prop}{Proposition}
\newtheorem{cor}{Corollary}
\theoremstyle{definition}
\newtheorem{definition}{Definition}
\theoremstyle{remark}
\newtheorem{remark}{Remark}
\begin{document}
\maketitle


\begin{abstract}
    A \emph{barcode} is a finite multiset of intervals on the real line, $B = \{ (b_i, d_i)\}_{i=1}^n$.
    Barcodes are important objects in topological data analysis, where they serve as summaries of the persistent homology groups of a filtration.
    The combinatorial properties of barcodes have also been studied, mainly in the context of interval orders and interval graphs. 
    In this paper, we define a new family of maps from the space of barcodes with $n$ bars to the permutation sets of various multisets, known as \emph{multipermutations}. These multipermutations provide new \emph{combinatorial invariants} on the space of barcodes. We then define an order relation on these multipermutations, which we show can be interpreted as a \emph{crossing number} for barcodes, reminiscent of T\'{u}ran's crossing number for graphs.
    Next, we show that the resulting posets are order-isomorphic to principal ideals of a well known poset known as the multinomial Newman lattice. Consequently, these posets form the graded face-lattices of polytopes, which we refer to as \emph{barcode lattices} or \emph{barcode polytopes}. Finally, we show that for a large class of barcodes, these invariants can provide bounds on the Wasserstein and bottleneck distances between a pair of barcodes, linking these discrete invariants to continuous metrics on barcodes.
\end{abstract}

\textbf{Keywords} Topological Data Analysis, Barcodes, Bruhat Order, Permutahedron, Multinomial Newman Lattice.

\maketitle
\section{Introduction}\label{intro}

A \emph{barcode} is a finite multiset of intervals on the real line, $B = \{ (b_i, d_i)\}_{i=1}^n$. Barcodes are important objects in persistent homology, where they serve as summaries of the persistent homology groups of a given filtration \cite{Carlsson2005}. In this context, each bar in a barcode represents an interval of resolutions in a filtration during which a particular generator is present. By analyzing the lengths and arrangements of the different bars researchers are able to determine significant features in the filtration. This theory has been applied to the study of data, where a filtration is applied to some dataset and its persistent homology groups are used to determine features in the underlying distribution \cite{Carlsson2009TopologyAD, Barcodes_2008}.


It is often desirable to study statistics on barcodes and much work has been done in this area (see, for instance, \cite{cohen-edelsbrunner07}).
Recently, researchers have begun studying combinatorial invariants associated to barcodes. In particular,  Kanari, Garin, and Hess discovered a natural mapping between the space of barcodes with $n+1$ bars and the symmetric group $\mathfrak{S}_n$ in \cite{kanari2020trees, tressII}. In follow-up works, Br\"{u}ck and Garin refine this mapping in order to stratify the space of barcodes into regions with the same average and standard deviation in each endpoint and the same permutation types \cite{bruck2021stratifying}. The coordinates obtained through this mapping are an elegant blend of continuous and discrete invariants. 

\subsection{Our Contributions}

In this work, we define a new family of combinatorial invariants on the space of barcodes. The first of these invariants comes from constructing a map from the space of barcodes with $n$ bars to certain equivalence classes of the multinomial Newman Lattice, $L(2^n)$, whose elements are all permutations of the multiset $\{1^2,2^2, \ldots, n^2\}$. We then define a partial order on these equivalence classes and we call the resulting poset the \emph{combinatorial barcode poset}. This poset admits an elegant construction based on an original notion of the \emph{crossing number} of a barcode, reminiscent of T\'{u}ran's crossing number for graphs.

\begin{repcor}{crossing_num_cor}
Let $B = \{(b_i,d_i)\}_{i=1}^n$ be a barcode. Then the rank of of $B$ in the combinatorial barcode lattice, denoted $\rho{(g(B))}$, is given by
$$ \rho(g(B)) = \sum_{b_i < b_j} \cross(i,j).
$$
\end{repcor}

Our first main result is that the unlabeled barcode poset, is order-isomorphic to a sublattice of $L(2^n)$.

\begin{reptheorem}{lattice_thm}
The combinatorial barcode poset $(L(2^n)/\mathfrak{S}_n, \leq_u)$ is isomorphic to a principal ideal of the multinomial Newman lattice, $L(2^n)$. Consequently, $(L(2^n)/\mathfrak{S}_n, \leq_u)$ is a lattice.
\end{reptheorem}

Next, we generalize this construction by considering a family of maps from the space of barcodes to equivalence classes of the lattice $L((2^k+1)^n)$,
consisting of all permutations of the multiset $\{1^{2^k+1},2^{2^k+1}, \ldots n^{2^k+1}\}$, where $k \in \mathbb{Z}_{\geq 0}$.
This produces an entire family of new combinatorial invariants, denoted $g_k(B)$. We show that these invariants retain the graded lattice structure found in the $k=0$ case, and so we call the poset of all $g_k(B)$ the \emph{power $k$ barcode lattice}.

\begin{reptheorem}{multi-k}
The power $k$ barcode poset $(L((2^k+1)^n)/\mathfrak{S}_n, \leq_k)$ is isomorphic to a principal ideal of the multinomial Newman lattice, $L((2^k+1)^n)$. Consequently, $(L((2^k+1)^n)/\mathfrak{S}_n, \leq_k)$  is a lattice.
\end{reptheorem}
Moreover, we show that larger values of $k$ produce invariants, $g_k(B)$ which contain the invariants produced by smaller values. It follows that increasing $k$ amounts to refining the information in the invariant. We then prove that as $k$ goes to infinity, this invariant uniquely determines a large family of barcodes up to an affine transformation.
\begin{reptheorem}{affine}
Let $B,B'$ be fundamentally strict barcodes with $n$ bars, where $B =\{(b_i,d_i)\}_{i=1}^n$ and $B' =\{(b_i',d_i')\}_{i=1}^n$. Additionally, let $g_k(B)$ denote its power-$k$ invariant, and likewise for $B'$. If $g_k(B) = g_k(B')$ for all $k\in \mathbb{N}$ and the interval graph $G_B$ (equivalently $G_{B'}$) is connected, then there exist constants $\alpha>0$ and $\delta \in \mathbb{R}$ such that $B = \alpha B' +\delta$, where $ \alpha B' +\delta := \{(\alpha b_i'+ \delta,\alpha d_i' + \delta): i\in [n]\}$.
\end{reptheorem}
We also show that these invariants can provide upper bounds on the bottleneck and $q$-Wasserstein metrics ($d_\infty$ and $d_q$, respectively) between barcodes, up to an affine transformation.
\begin{reptheorem}{convergence}
Let $B,B'$ be $k$-strict barcodes with $n$ bars such that their power-$k$ invariants are equal, $g_k(B) = g_k(B')$. Suppose there exists a bar $(b_*,d_*) \in B$ (or equivalently in $B'$) which contains all others, that is to say $b_* \leq b_i$ and $d_* \geq d_i$ for all $i\in [n]$.
	Then there exist constants $\alpha > 0$ and $\delta \in \mathbb{R}$ such that
	\begin{align*}
    d_\infty(B, \alpha B' +\delta) &\leq \frac{\lvert d_*-b_*\rvert}{2^k}, \text{ and} \\
d_q(B, \alpha B' +\delta) &\leq (n-1)^{\frac{1}{q}} \frac{\lvert d_*-b_*\rvert}{2^k}.
\end{align*}
\end{reptheorem}

Finally, we show that the barcodes lattices form polytopes via an embedding into the classic permutahedron. In fact, these polytopes are a special case of \emph{Bruhat interval polytopes}, studied at length in \cite{Williams2015}. We use some results therein to compute the dimensions of our barcode polytopes (Corollary \ref{polytope_dim}).



\section{Background}\label{background}

\subsection{Barcodes}
The language of barcodes used below comes from persistence homology; see \cite{Barcodes_2008} for an introduction and details.

\begin{definition}
A \emph{barcode} is a finite multiset of intervals on the real line, $B = \{ (b_i, d_i)\}_{i=1}^n$, where necessarily $b_i < d_i$ for all $i\in [n]$. Each interval is called a \emph{bar}; its left endpoint $b_i$ is called its \emph{birth (time)} and its right endpoint $d_i$ is called its \emph{death (time)}. We denote the set of all barcodes with $n$ bars by $\mathcal{B}^n$.
\end{definition}

\begin{remark}
Readers familiar with persistent homology will notice that we suppose that the bars corresponding to essential classes have finite values instead of being half-open intervals. This is reasonable in the context of computing Rips/\v{C}ech complexes of a dataset, where there is at most a single essential class and it is assigned a finite death-time in order to display it in a diagram.
\end{remark}

Barcodes are often displayed as a stacked set of intervals above the real line as in Figure \ref{tda_barcode}. Often one will refer to this diagram itself as a barcode. Barcodes are also commonly represented as points $(b_i,d_i)$ in $\mathbb{R}^2$ in a figure known as a \emph{persistence diagram}. Figure \ref{pers_dgm} shows the persistence diagram for the barcode in Figure \ref{tda_barcode}. Note that the points in Figure \ref{pers_dgm} lie above the diagonal since we require that $b_i < d_i$ for all $i \in [n]$.

\begin{figure}[h]
     \centering
     \begin{subfigure}[b]{0.26\textwidth}
         \centering
         \includegraphics[width=\textwidth]{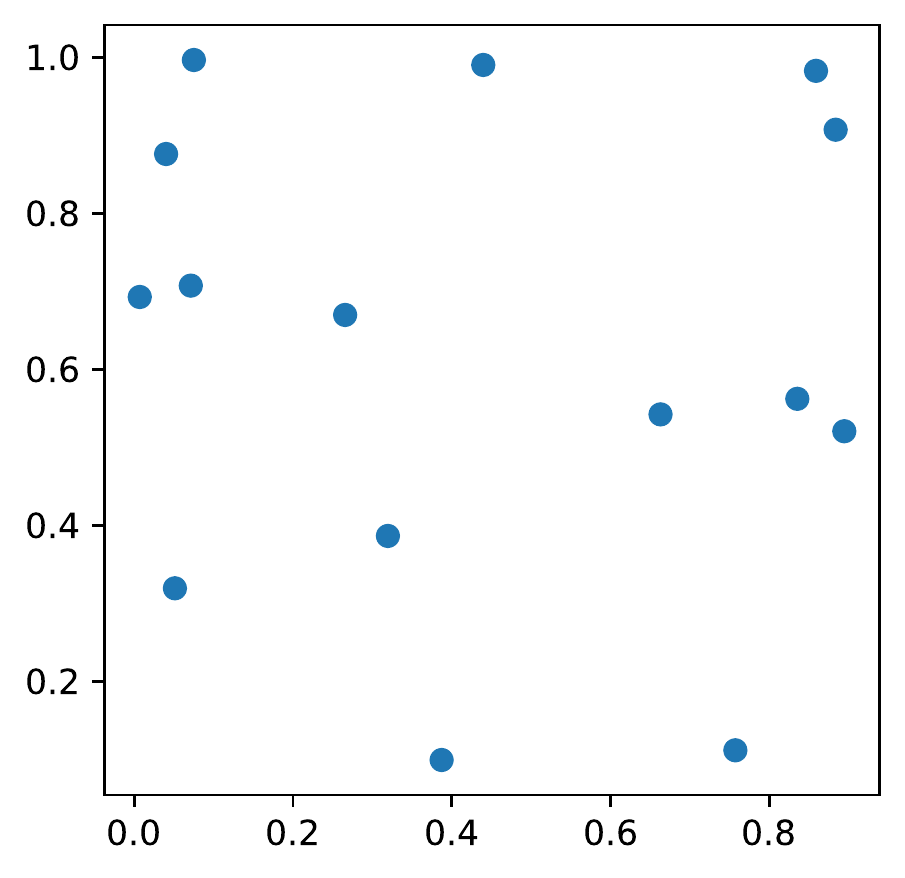}
         \caption{Sample data.}
         \label{data}
     \end{subfigure}
     \hfill
     \begin{subfigure}[b]{0.35\textwidth}
         \centering
         \includegraphics[width=\textwidth]{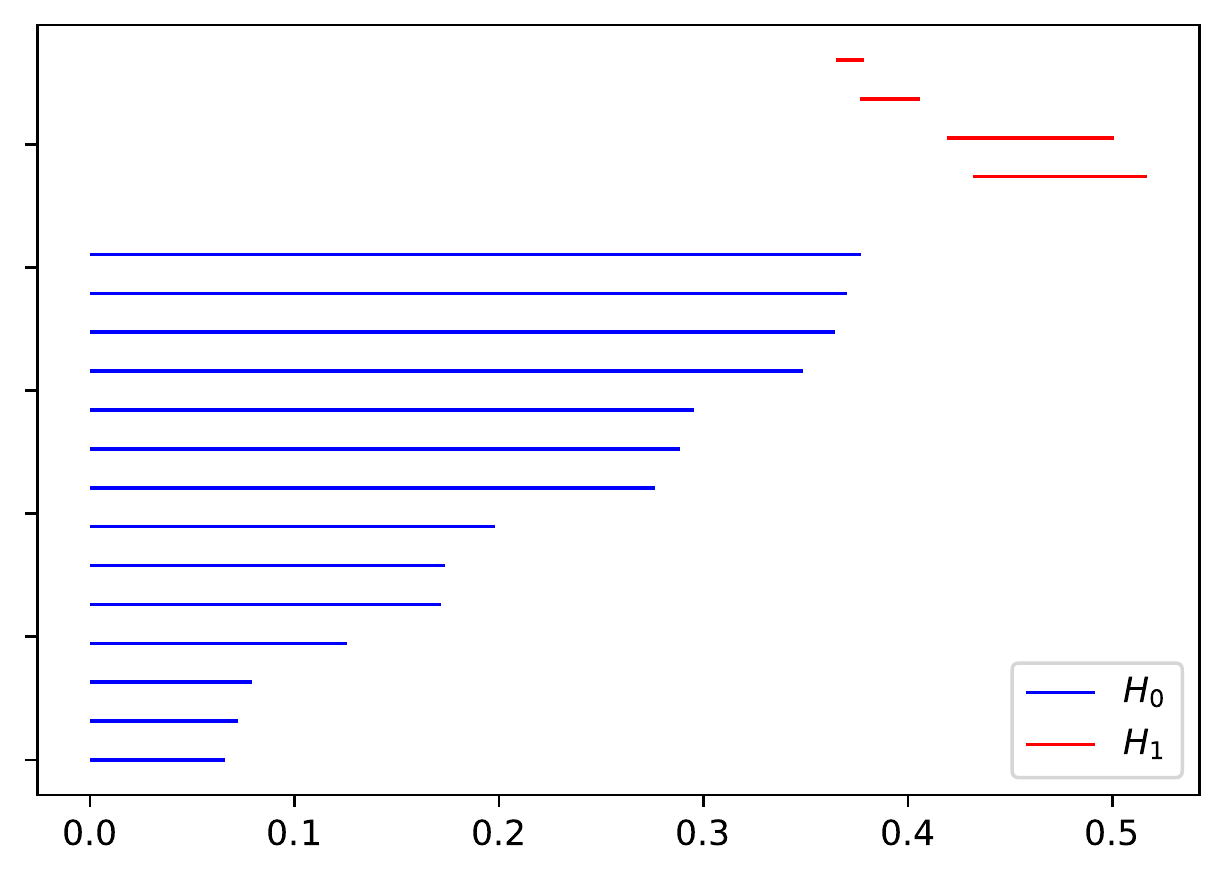}
         \caption{Persistence barcode.}
         \label{tda_barcode}
     \end{subfigure}
     \hfill
     \begin{subfigure}[b]{0.29\textwidth}
         \centering
         \includegraphics[width=\textwidth]{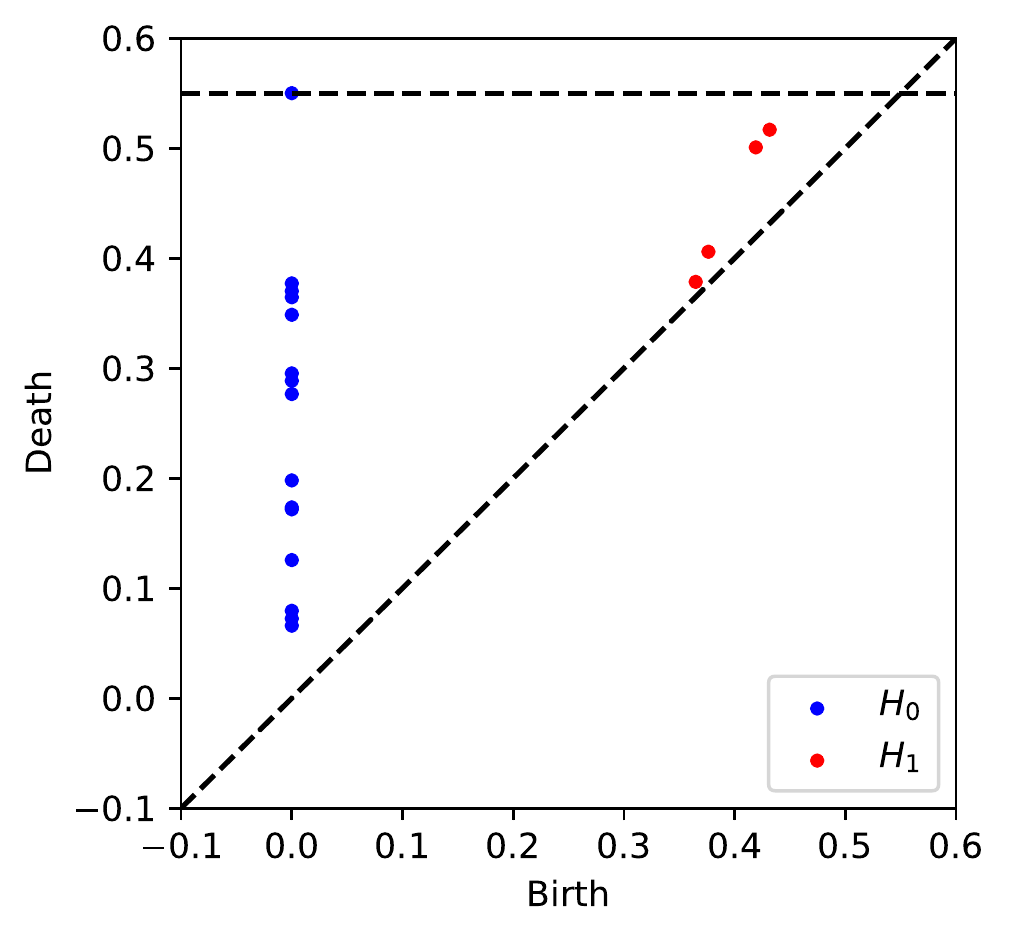}
         \caption{Persistence diagram.}
         \label{pers_dgm}
     \end{subfigure}
        \caption{We compute the Rips filtration of the sample points in (\ref{data}), then display the order 0 and order 1 persistence homology  groups as both a barcode (\ref{tda_barcode}) and a persistence diagram (\ref{pers_dgm}).}
        \label{Rips_diagrams}
\end{figure}

\begin{definition}
A barcode $B = \{ (b_i, d_i)\}_{i=1}^n$ is called \emph{strict} if $b_i \neq d_j$ for all $(i,j) \in [n]^2$ and $b_i \neq b_j$, $d_i \neq d_j$ for all $i \neq j$, i.e., if no birth/death times are repeated among the bars. We denote the set of strict barcodes with $n$ bars by $\mathcal{B}^n_{st}$.
\end{definition}

A similar definition of strict barcodes was first introduced in \cite{kanari2020trees} to define a bijection between $\mathfrak{S}_n$ and an equivalence classes of barcodes defined therein. Their definition differed slightly from ours in that: (1) we do not require that there exist  an essential bar $(b_0, d_0)$ that contains all the others, and (2) we require that no times are shared between births and deaths, $b_i \neq d_j$, which is not present in their definition. Furthermore, we emphasize that the equivalence classes defined in \cite{kanari2020trees} are different from those we will define in Section \ref{perm}.

We can define a topology on the space of all barcodes, regardless of strictness or the number of bars, by way of a distance function. Two popular and well-studied choices are the \emph{bottleneck distance}
	and the \emph{$q$-Wasserstein distance} \cite{Edel_survery2008}.
\begin{definition}\label{bottleneck}
Let $B= \{(b_i,d_i)\}_{i=1}^n$ and $B'= \{(b_i',d_i')\}_{i=1}^m$ be two barcodes. The \emph{bottleneck distance} between $B$ and $B'$ is
$$
d_\infty(B,B') = \inf_\gamma \max_{x\in B}\|x- \gamma(x)\|_\infty,
$$
where $\gamma$ runs over all perfects matchings of the points (bars) $x_i = (b_i,d_i)$ in $B$ and the points (bars) in $B'$, allowing bars to be matched to the diagonal $\Delta = \{(x,x): x\in \mathbb{R}\}$. Here $\|\cdot\|_\infty$ denotes the $\ell^\infty$-norm on $\mathbb{R}^2$.
\end{definition}
Intuitively, the bottleneck distance computes the largest distance in $\ell^\infty$-norm between a pair of matched points on the persistence diagrams of $B$ and $B'$, taking the infimum over all perfect matchings. The $q$-Wasserstein distance is similar; it can be thought of as the total distance between all matched points, again taking the infimum over all perfect matchings.
\begin{definition}\label{q-wasserstein}
Let $B= \{(b_i,d_i)\}_{i=1}^n$ and $B'= \{(b_i',d_i')\}_{i=1}^m$ be two barcodes. The \emph{$q$-Wasserstein distance} between $B$ and $B'$ is
$$
d_q(B,B') = \inf_\gamma \big( \sum_{x\in B}\|x- \gamma(x)\|_\infty^q \big)^{1/q},
$$
where $\gamma$ runs over all perfects matchings of the points (bars) $x_i = (b_i,d_i)$ in $B$ and the points (bars) in $B'$, allowing bars to be matched to the diagonal $\Delta = \{(x,x): x\in \mathbb{R}\}$. Here $\|\cdot\|_\infty$ denotes the $\ell^\infty$-norm on $\mathbb{R}^2$.
\end{definition}

\subsection{Weak Bruhat Orders and Newman Lattices}
We will assume the reader is familiar with the basic definitions and properties of posets. For a complete introduction to partially ordered sets see \cite[Ch.3]{EC1}, for example.

We recall for the reader that a \emph{lattice} is a poset $L$ for which every pair of elements has a least upper bound and greatest lower bound. 
We also recall that a \emph{principal ideal} of a lattice $L$ is a subposet of the form $I = \{s\in L: s \leq \alpha \}$ for some $\alpha \in L$; we say $I$ is the principal ideal generated by $\alpha$. It is a well known result that a principal ideal $I$ of a lattice $L$ is a sublattice of $L$.

A lattice of particular interest to us is the \emph{permutahedron} \cite{permutahedra}. Recall that for $\pi \in \mathfrak{S}_N$ an \emph{inversion} in $\pi$ is a pair $(\pi_i, \pi_j)$ such that $i <j$ and $\pi_i > \pi_j$, i.e., it is a pair of elements that appear out of order. The \emph{inversion set} of $\pi$, $\inv(\pi)$, is the set of all inversions in $\pi$. The $\emph{inversion number}$ of $\pi$ is the cardinality of its inversion set $\#\inv(\pi)$. For example, if $\pi = (1~2~5~4~3~6) \in \mathfrak{S}_6$ then $\inv(\pi) = \{(5,4), (5,3), (4,3)\}$ and $\#\inv(\pi) = 3$. Notice that $\pi \neq \sigma \implies \inv(\pi) \neq \inv(\sigma)$, so we can think of $\inv$ as an injective map from the permutations in $\mathfrak{S}_n$ to subsets of $[n]^2$.

The \emph{weak Bruhat order} (or \emph{weak order} for short) is the relation $\leq_W$ on $\mathfrak{S}_N$ defined by $\pi \leq_W \sigma$ if and only if $\inv(\pi) \subseteq \inv(\sigma)$. Note that $\pi \leq_W \sigma \implies \#\inv(\pi) \leq \#\inv(\sigma)$, but the converse need not hold.
One can show that the $\pi \lessdot_W \sigma$ if and only if $\#\inv(\pi) +1 = \#\inv(\sigma)$ and $\sigma = (i \,\,\, i+1)\pi$ for some $i \in [n-1]$, which means that $\sigma$ equals $\pi$ after transposing a pair of its \emph{adjacent} entries.

It is a well known result that the weak order on $\mathfrak{S}_n$ forms the face lattice of a polytope known as the permutahedron $\cite{Berge, permutahedra}$; therefore we to refer to the poset $(\mathfrak{S}_n \leq_W)$ as the permutahedron as well.
The weak Bruhat order can also be defined similarly on arbitrary Coxeter systems (see \cite{Coxeter}), but for this manuscript the definition on the symmetric group is sufficient.

One can generalize the construction of the permutahedron to multiset permutations. For $\mathbf{m}= (m_1, \ldots,m_n) \in \mathbb{N}^n$, let $L(\mathbf{m})$ denote the set of permutations of the multiset $M = \{1^{m_1}, \ldots, n^{m_n}\}$, with the usual multiset notation where exponents represent multiplicity.
An order relation on $L(\mathbf{m})$ can be succinctly defined via the following cover relations. For $s,t \in L(\mathbf{m})$, we say that $t$ covers $s$ if and only if $s$ and $t$ differ only in swapping an adjacent pair of entries, which are in numerical order in $s$ but are reversed in $t$. For instance, we have that $(1~1~2~3~2) \lessdot (1~2~1~3~2)$ in $L(2,2,1)$. The poset $L(\mathbf{m})$ is called the \emph{multinomial Newman lattice} and was originally introduced by Bennett and Birkhoff in \cite{Birkhoff94}.

The multinomial Newman order can also be defined explicitly as follows.
Consider the \emph{set}, $$S=\{1_1, \ldots, 1_{m_1}, \ldots, n_1, \ldots, n_{m_n}\},$$ which we endow with the lexicographic total ordering $1_1 \lessdot 1_2 \lessdot \ldots \lessdot 1_{m_1} \lessdot \ldots \lessdot n_{m_n}$.
Note that we may identify a multipermutation $s \in L(\mathbf{m})$ with a unique corresponding permutation $\pi \in \mathfrak{S}_S$ where we require that copies of the same elements appear in lexicographic order, that is to say $i_j$ appears before $i_k$ in $\pi$ for all $i\in [n]$ and all $j < k$. For example, the multipermutation $(1~2~1~3~2) \in L(2,2,1)$ is identified with the permutation $(1_1 ~2_1~ 1_2~ 3_1~ 2_2)$. Let $\iota : L(\mathbf{m}) \to \mathfrak{S}_S$ denote this mapping.

One can show that $\iota$ is in fact an order-isomorphism from $L(\mathbf{m})$ to a principal ideal of the permuhatedron $\mathfrak{S}_S$;  it follows that $L(\mathbf{m})$ is also a lattice \cite{Birkhoff94, gen_permutahedra, Santocanale2007}. Specifically, the multinomial Newman lattice is isomorphic to the principal ideal generated by $(n_{1} \ldots n_{m_n} \ldots 1_1 \ldots 1_{m_1})$. Thus $L(\mathbf{m})$ has (necessarily unique) minimal and maximal elements, denoted $\hat{0}$ and $\hat{1}$ respectively. $\hat{0}$ is the identity permutation $(1~1\ldots 1~2~2 \ldots n)$ while $\hat{1}$ is the ``fully reversed" permutation $(n~n \ldots n ~(n-1)~(n-1) \ldots 1)$.


\section{The Unlabeled Barcode Poset}\label{reps}

\subsection{Barcodes as Multiset Permutations}\label{perm}

Let $L(2^n)$ denote the multinomial Newman lattice $L(\mathbf{m})$ for $\mathbf{m} = (2, 2, \ldots, 2) \in \mathbb{N}^n$.
Consider a strict barcode $B = \{(b_i, d_i)\}_{i=1}^n$. By definition, the birth/death times in $B$ are distinct, so the set $T = \{b_1, d_1, \ldots, b_n, d_n\}$ can be linearly ordered: $t_{i_1} < t_{i_2} < \ldots < t_{i_{2n}}$.
The indices in this order induce a multipermutation $(i_1, i_2, \ldots, i_{2n}) \in L(2^n)$.
For example, if $B_1$ is the strict barcode with 3 bars given by $b_1 = 1.0,\, d_1 = 2.0,\, b_2 = 1.5,\, d_2 = 3.0,\, b_3 = 2.5,\, d_3 = 2.75$, then the birth/death times in $T$ are ordered: $b_1 < b_2 < d_1 < b_3 < d_3 < b_2$. This produces the permutation $(1~2~1~3~3~2) \in L(2^3)$.

Let $f : \mathcal{B}^n_{st} \to L(2^n)$ denote this mapping, then we can use $f$ to define an equivalence relation on $\mathcal{B}^n_{st}$, which we call \emph{labeled barcode equivalence}. We leave it to the reader to verify that the following definition satisfies the necessary requirements of an equivalence relation.

\begin{definition}[Labeled barcode equivalence]
Let $B = \{(b_i, d_i)\}_{i=1}^n$ be a strict barcode whose birth/death times, $T = \{b_1, d_1, \ldots, b_n, d_n\}$, are ordered: $t_{i_1} < t_{i_2} < \ldots < t_{i_{2n}}$. Define the map $f : \mathcal{B}^n_{st} \to L(2^n)$ so that $f(B) = (i_1~ i_2~ \ldots~ i_{2n})$. Then, we say two barcodes $B_1, B_2 \in \mathcal{B}^n_{st}$ are \emph{labeled barcode equivalent} if and only if $f(B_1) =f(B_2)$.
\end{definition}

Perhaps unsurprisingly, one issue with labeled barcode equivalence is its dependence on a given labeling. For example, consider the strict barcode $B_2$ given by: $b_2 = 1.0,\, d_2 = 2.0,\, b_1 = 1.5,\, d_1 = 3.0,\, b_3 = 2.5,\, d_3 = 2.75$. Clearly, $B_2$ is the same barcode as $B_1$ from the prior example, except that the labels of bars 1 and 2 have been swapped. As a result, $f(B_1) = (1~2~1~3~3~2)$ while $f(B_2)= (2~1~2~3~3~1)$ and hence the two are not labeled barcode equivalent. Ideally, we would want any two barcodes to remain equivalent if their images under $f$ are the same up to some relabeling.

To that end, consider the group action $\Sigma : \mathfrak{S}_n \times L(2^n) \to L(2^n)$ given by $\Sigma(\pi, s) =\pi \circ s$, where
$\pi \circ s$ denotes $\pi$ acting element-wise on the terms in $s$. Let $[s]$ denote the orbit of $s\in L(2^n)$ and let $L(2^n)/\mathfrak{S}_n$ denote the set of all orbits, which we will call the space of \emph{combinatorial barcodes} with $n$ bars.
We can use this action to define a label-agnostic equivalence relation on the space of barcodes, which we call \emph{combinatorial barcode equivalence}. Again, we ask the reader to verify the following definition satisfies the necessary requirements.


\begin{definition}[combinatorial barcode equivalence]
Let $B_1, B_2 \in \mathcal{B}_n^{st}$ and let $f : \mathcal{B}^n_{st} \to L(2^n)$ denote the labeled barcode equivalence map. Let $g: \mathcal{B}^n_{st} \to L(2^n)/\mathfrak{S}_n$ denote the map given by $g(B) = [f(B)]$.
We say $B_1, B_2$ are \emph{combinatorially equivalent} if and only if $g(B_1) = g(B_2)$. 
\end{definition}




\subsection{Ordering Combinatorial Barcodes}\label{poset}

We must now the decide on a convention for choosing a representative of a combinatorial barcode, which again is an orbit in $L(2^n)$. Let $s\in L(2^n)$ and consider the substring $\tau_s$ of $s$ given by the first occurrence of each element (birth times). Now consider $\tau_s$ as a permutation in $\mathfrak{S}_n$ given in one-line notation. Then, the action of $\tau_s^{-1}$ on $s$ relabels $s$ so that the birth times now appear in order, i.e., the first copy of 1 appears before the first copy of 2, which appears before the first copy of 3, and so on.
For example, for $s = (2~1~4~1~3~3~2~4)\in L(2^n)$ we have that $\tau_s = (2~1~4~3)$ and so $\tau_s^{-1}\circ s = (1~2~3~2~4~4~1~3)$.
Note that if $u,v \in [s]$ we have
$\tau_u^{-1}\circ u = \tau_v^{-1}\circ v$. Hence the map $\psi: L(2^n)/\mathfrak{S}_n \to L(2^n)$ given by $\psi([s]) = \tau_s^{-1}\circ s$ does not depend on the choice of $s$. Therefore, we define the \emph{canonical representative} of an orbit, $[s]$ to be the permutation $\psi(s) \in L(2^n)$.


Observe that $\psi$ is injective, since by definition $\tau_s^{-1} \circ s \in [s]$. Therefore, $\psi$
gives us an embedding of the combinatorial barcodes back into the multinomial Newman lattice, which we can use to extend the multinomial Newman order to $L(2^n)/\mathfrak{S}_n$.


\begin{definition}
Let $[s],[t] \in L(2^n)/ \mathfrak{S}_n$ and let $\psi: L(2^n)/\mathfrak{S}_n \to L(2^n)$ be the map which sends each orbit to its canonical representative. Then, let $\leq_c$ be denote relation given by $[s] \leq_c [t]$ if and only if $\psi([s]) \leq \psi([t])$, where $\leq$ denotes the multinomial Newman order. We call the pair $(L(2^n)/\mathfrak{S}_n, \leq_c)$ the \emph{combinatorial barcode poset} or \emph{combinatorial barcode lattice} (we will see in a moment that this name is justified).
\end{definition}

\begin{remark}
Note that we could have directly defined a map between $\mathcal B^n_{st}$ and $L(2^n)$ that sends each barcode $B$ to the invariant $\psi(g(B))$. In fact, this is the approach that the authors in \cite{kanari2020trees} and \cite{ bruck2021stratifying} took when defining their own maps. While our construction is less direct, we chose it because it emphasizes the importance of the action of the symmetric group.
\end{remark}

Now, recall that the multinomial Newman lattice itself can be defined using the embedding $\iota: L(\mathbf{m}) \hookrightarrow \mathfrak{S}_S$. Hence, we have that,
\begin{equation}
    [s] \leq_c [t] 
    \iff \inv(\iota \circ \psi([s]))\subseteq \inv(\iota \circ \psi([t])).
\end{equation}
The roles of $f,g, \psi, \iota$ and $\Sigma$ are summarized in the diagram (\ref{cd}). By abuse of notation, we let $\Sigma$ also denote the map that sends each element to its orbit. Note, this diagram is \emph{not} commutative as $\psi \circ g \neq f$, although it is the case that $\Sigma \circ f = g$.


\begin{equation}\label{cd}
\begin{tikzcd}
                                                           &  & \mathcal{B}_{st}^n \arrow[dd, "f"] \arrow[lldd, "g"', bend right] &  &                                        \\
                                                           &  &                                                                   &  &                                        \\
L(2^n)/\mathfrak{S}_n \arrow[rr, "\psi", hook, bend right] &  & L(2^n) \arrow[ll, "\Sigma"', bend right] \arrow[rr, "\iota", hook]     &  & \mathfrak{S}_s \cong \mathfrak{S}_{2n}
\end{tikzcd}
\end{equation}

A remarkable property of the barcode poset is that it admits an elegant, alternate construction based on inversions that does not require first ``translating" to the symmetric group.


\begin{definition}
Let $[s]$ be a combinatorial barcode and without loss of generality let $s$ denote its canonical representative, $\psi([s])$. Then, the \emph{inversion multiset} of $[s]$ is the multiset of pairs $\{(j,i)^{a_{ij}}: 1\leq i < j \leq n\}$ where $a_{ij}$ is equal to the number of pairs of indices $(k,\ell)$ such that  $s_k = i, s_\ell =j$ and $k >\ell$. We will denote the inversion multiset of $[s]$ by $\invm([s])$ or by $\invm(s)$ when no confusion can occur.
\end{definition}

Stated simply, the inversion multiset has as elements the pairs $(j,i)$, $i<j$, with multiplicity equal to the number of pairs of $i$'s and $j$'s that appear out of order in $s$.
For example, $\invm((1~2~3~2~4~4~1~3)) = \{(2,1)^2, (3,1)^1, (4,1)^2, (3,2)^1, (4,3)^1 \}$.
Now, for $[s],[t], \in L(2^n)/\mathfrak{S}_n$, write $[s] \prec [t]$, if $\invm(s)\subseteq \invm(t)$; recall, given multisets $A= \{x_1^{a_1}, \ldots, x_n^{a_n}\}$, $B=\{x_1^{b_1}, \ldots, x_n^{b_n}\}$ we say that $A \subseteq B$ if $a_i \leq b_i$ for all $i \in [n]$.

\begin{prop}\label{construction}
 For $[s],[t] \in L(2^n)/\mathfrak{S}_n$, we have that $[s] \prec[t] \iff [s] \leq_c [t]$.
\end{prop}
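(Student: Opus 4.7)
The plan is to translate both sides to statements about inversion sets in the symmetric group $\mathfrak{S}_S$ via the embedding $\iota$, and then establish a tight correspondence between elements of $\invm(s)$ (counted with multiplicity) and inversions of $\iota(\psi([s]))$.

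First, by definition $[s] \leq_c [t]$ means $\psi([s]) \leq \psi([t])$ in the multinomial Newman order. Using the order-isomorphism $\iota : L(2^n) \hookrightarrow \mathfrak{S}_S$ onto a principal ideal of the permutahedron, this is equivalent to $\inv(\iota \circ \psi([s])) \subseteq \inv(\iota \circ \psi([t]))$. So I would reduce the problem to showing, for canonical representatives $s = \psi([s])$ and $t = \psi([t])$, that $\invm(s) \subseteq \invm(t) \iff \inv(\iota(s)) \subseteq \inv(\iota(t))$.

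The main step is to describe $\inv(\iota(s))$ explicitly when $s$ is canonical. Because the first occurrences in $s$ appear in the order $1, 2, \ldots, n$, we know that $i_1$ precedes $j_1$ in $\iota(s)$ whenever $i < j$, and by construction of $\iota$ the copy $i_1$ always precedes $i_2$. A case analysis then shows that every inversion of $\iota(s)$ must be of the form $(j_p, i_2)$ for some $i < j$ and $p \in \{1,2\}$, and such a pair is an inversion exactly when $j_p$ appears before $i_2$ in $\iota(s)$. On the $\invm$ side, the multiplicity of $(j,i)$ counts pairs $(k,\ell)$ with $s_k = i$, $s_\ell = j$, and $k > \ell$; canonicality forces the copy of $i$ to be $i_2$, so this multiplicity equals $|\{p \in \{1,2\} : \text{pos}(j_p) < \text{pos}(i_2)\}|$.

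The key observation, and the main subtlety, is that because $j_1$ precedes $j_2$, the set $\{p : \text{pos}(j_p) < \text{pos}(i_2)\}$ is necessarily one of $\emptyset$, $\{1\}$, or $\{1,2\}$ — never $\{2\}$ alone. Thus the three possible multiplicities $0, 1, 2$ of $(j,i)$ in $\invm(s)$ correspond canonically to the presence in $\inv(\iota(s))$ of no inversion, the single inversion $(j_1, i_2)$, or both inversions $(j_1, i_2)$ and $(j_2, i_2)$, respectively. From this asymmetric structure, containment is preserved in both directions: if $\invm_s(j,i) \leq \invm_t(j,i)$ for every pair, the specific inversions of $\iota(s)$ for that pair form a subset of those of $\iota(t)$; conversely, if all inversions of $\iota(s)$ lie in $\iota(t)$, multiplicities weakly increase pair-by-pair. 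Combining with the reduction in the first paragraph concludes that $[s] \prec [t] \iff [s] \leq_c [t]$.
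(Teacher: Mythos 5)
Your proposal is correct and follows essentially the same approach as the paper: reduce both orders to inversion-set containment in $\mathfrak{S}_S$, then observe that canonicality restricts the inversions involving copies of $i<j$ to the two candidates $(j_1,i_2)$ and $(j_2,i_2)$ with the monotone ``prefix'' structure $\emptyset \subset \{(j_1,i_2)\} \subset \{(j_1,i_2),(j_2,i_2)\}$, which puts inversion multiplicities $0,1,2$ in one-to-one correspondence with these three sets and makes containment transfer in both directions. The paper phrases the same observation via an explicit enumeration of the three patterns in which copies of $i$ and $j$ can occur in the canonical word, but the underlying argument is identical to yours.
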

\begin{proof}
Let $[s] \in L(2^n)/\mathfrak{S}_n$ and without loss of generality let $s = \psi([s])$. Let $(j,i)$ be an element in $\invm(s)$ with multiplicity $k$. Recall that in the canonical representative, $s$, the first copy of $i$ appears before the first copy of $j$ for all $i <j$. Therefore, the copies of $i,j$ must appear according to one of three patterns:
$$(1) \, i \ldots i \ldots j \ldots j, \,(2)\, i \ldots j \ldots i \ldots j, \,(3)\, i \ldots j \ldots j \ldots i\,.$$
It follows that $k\in \{0,1,2\}$, specifically,  $k=0$ when $s$ contains pattern (1), $k=1$ when $s$ contains pattern (2), and $k=2$ when $s$ contains pattern (3).

Now, let $A_{ij} = \{(j_1, i_1), (j_2, i_1), (j_1, i_2), (j_2,i_2)\}$ be the set of all possible inversions involving $i$ and $j$ in $\iota(s)$, which is a permutation of the set $S=\{1_1,1_2, 2_1, 2_2, \ldots, n_1, n_2\}$. It follows that:
\begin{equation*}
\inv(\iota(s)) \cap A_{ij} =
    \begin{cases}
      \emptyset &,\, k = 0 \\
      \{(j_1, i_2) \}&, \, k=1 \\
      \{(j_1, i_2), (j_2,i_2)\} &, \, k=2
   \end{cases}.
\end{equation*}
Now suppose $[s] \prec [t]$. Then, $(j,i) \in \invm(t)$ with multiplicity $\ell$ and necessarily $\ell \geq k$. As a result, $(\inv(\iota(s)) \cap A_{ij}) \subseteq (\inv(\iota(t)) \cap A_{ij})$. Applying this argument to all pairs in $\invm(s)$, it follows that $\inv(\iota(s)) \subseteq \inv(\iota(t))$ and thus $\iota(s) \leq_W \iota(s) \implies [s]\leq_c [t]$.

Moreover, this argument is reversible in that we can deduce the multiplicity of $(j,i) \in \invm(s)$ from $\inv(\iota(s)) \cap A_{ij}$. Hence, we also have that $[s]\leq_c [t]\implies [s] \prec [t]$, as desired.
\end{proof}

\begin{remark}
We emphasize that the construction above does not work for the multinomial Newman lattice $L(2^n)$. For example, $(1~2~2~1)$ and $(2~1~1~2)$ both have the inversion multiset $\{(2,1)^2\}$ but their inversion sets are $\{(2_1,1_2), (2_2,1_2) \}$ and $\{(2_1, 1_1), (2_1, 1_2)\}$, respectively. Hence we do not have the necessary one-to-one correspondence between inversion multisets of elements in $L(2^n)$ and their inversions sets after embedding in $\mathfrak{S}_S$.
\end{remark}

Now, let $B = \{(b_i,d_i)\}_{i=1}^n$ be a barcode. The proof of Proposition \ref{construction} illuminates a simple way of computing the rank of the combinatorial barcode $g(B) \in L(2^n)/\mathfrak{S}_n$. For each pair of bars $i,j$ with $b_i < b_j$, define the \emph{crossing number} of bars $i$ and $j$ to be:
\begin{equation}
\cross(i,j)=
    \begin{cases}
    \begin{aligned}
      0 &,\,\, d_i < b_j  &&\text{ (disjoint)}\\
      1 &, \,\, b_j < d_i < d_j &&\text{ (stepped)}\\
      2 &, \,\, d_j < d_i &&\text{ (nested)}
      \end{aligned}
   \end{cases}.
\end{equation}
If $B$ is given as a barcode diagram then the crossing number of a pair of bars is easy to read: it equals 0 when the bars are disjoint, 1 when they are stepped, and 2 when they are nested (see Figure \ref{bars}).

\begin{figure}[h]
     \centering
     \begin{subfigure}[b]{0.3\textwidth}
         \centering
         \includegraphics[width=\textwidth]{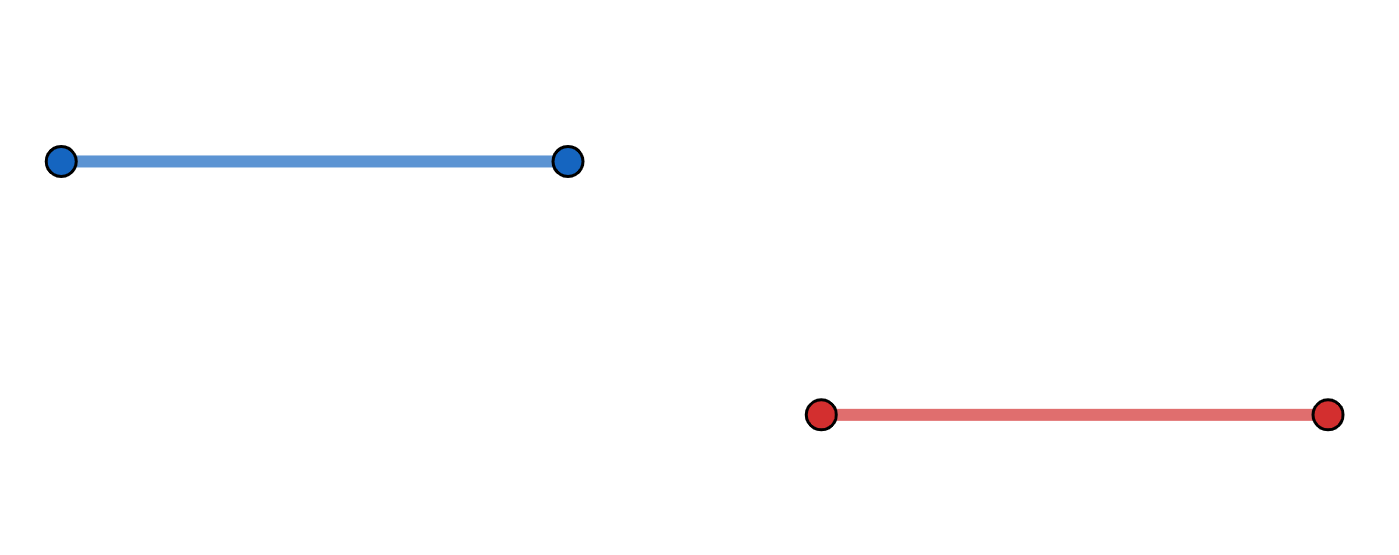}
         \caption{Disjoint.}
         \label{disjoint}
     \end{subfigure}
     \hfill
     \begin{subfigure}[b]{0.3\textwidth}
         \centering
         \includegraphics[width=\textwidth]{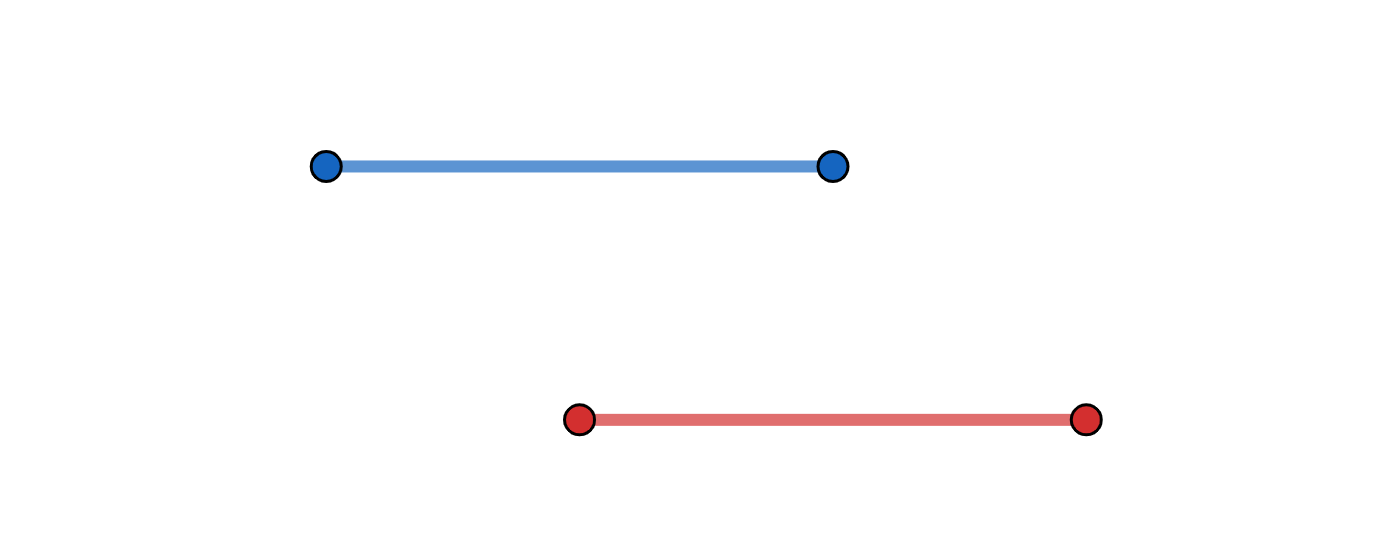}
         \caption{Stepped.}
         \label{stepped}
     \end{subfigure}
     \hfill
     \begin{subfigure}[b]{0.3\textwidth}
         \centering
         \includegraphics[width=\textwidth]{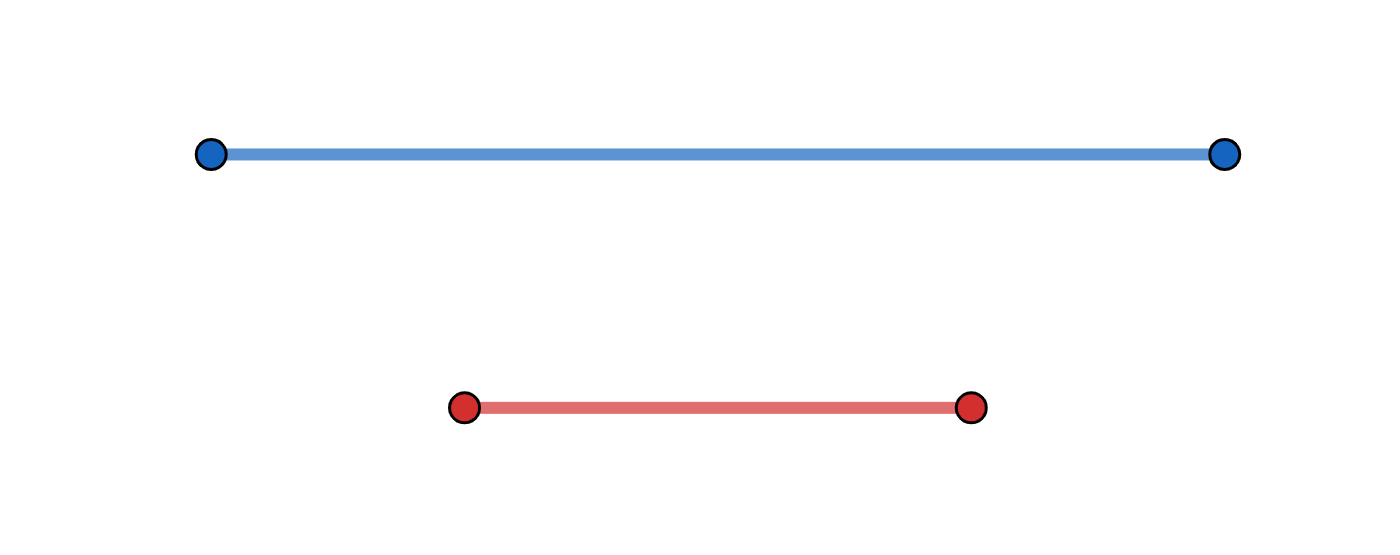}
         \caption{Nested.}
         \label{nested}
     \end{subfigure}
        \caption{The three possible arrangements of a pair of bars.}
        \label{bars}
\end{figure}
\begin{cor}\label{crossing_num_cor}
Let $B = \{(b_i,d_i)\}_{i=1}^n$ be a barcode. Then,
$$ \rho(g(B)) = \sum_{b_i < b_j} \cross(i,j).
$$
\end{cor}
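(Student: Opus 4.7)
The plan is to reduce the corollary to the analysis already carried out in the proof of Proposition \ref{construction}, since essentially all of the work is done there. The only new ingredient is to match the three patterns of pairs of copies of $i$ and $j$ in the canonical representative with the three geometric configurations (disjoint, stepped, nested) that define $\cross(i,j)$.

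First I would unpack what $\rho([s])$ computes. By definition of $\leq_c$, the combinatorial barcode poset is the image of $\psi$ inside $L(2^n)$, so $\rho([s])$ equals the rank of $\psi([s])$ in $L(2^n)$. Since the cover relations in the multinomial Newman lattice are adjacent swaps that create a single new inversion, the rank of $\psi([s])$ in $L(2^n)$ equals $\#\inv(\iota\circ\psi([s]))$ inside $\mathfrak{S}_S\cong\mathfrak{S}_{2n}$. In other words, $\rho([s])$ is the total number of inversions of the image of the canonical representative under $\iota$.

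Next I would invoke the case analysis from the proof of Proposition \ref{construction}. That proof partitions $\inv(\iota(\psi([s])))$ into the pieces $\inv(\iota(\psi([s])))\cap A_{ij}$ indexed by unordered pairs $i<j$ of labels, and shows that $|\inv(\iota(\psi([s])))\cap A_{ij}|\in\{0,1,2\}$, depending on whether the copies of $i,j$ in $\psi([s])$ realize pattern (1), (2), or (3). Summing over all pairs gives
$$\rho([s]) \;=\; \sum_{1\leq i<j\leq n} |\inv(\iota(\psi([s])))\cap A_{ij}|.$$
In the canonical representative, the first copy of $i$ precedes the first copy of $j$ whenever $i<j$, which geometrically means exactly that $b_i<b_j$ after the relabeling performed by $\psi$. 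Under this correspondence, pattern (1) $i\ldots i\ldots j\ldots j$ is precisely $d_i<b_j$ (disjoint), pattern (2) $i\ldots j\ldots i\ldots j$ is precisely $b_j<d_i<d_j$ (stepped), and pattern (3) $i\ldots j\ldots j\ldots i$ is precisely $d_j<d_i$ (nested). Hence $|\inv(\iota(\psi([s])))\cap A_{ij}|=\cross(i,j)$ in the canonical labeling.

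The last step is to argue that the sum $\sum_{b_i<b_j}\cross(i,j)$ is unchanged by the relabeling. This is immediate: the quantity $\cross(i,j)$ depends only on the two bars as subsets of $\mathbb{R}$, not on how they are indexed, and the summation ranges over all $\binom{n}{2}$ unordered pairs of bars (selecting the representative with smaller birth time). Thus the sum computed in the canonical labeling agrees with the sum computed in the original labeling of $B$, which yields the claimed identity. The main obstacle is really just bookkeeping: verifying that the pattern-to-geometry dictionary matches the disjoint/stepped/nested definition of $\cross$ and confirming invariance of the sum under relabeling. Both are direct from the definitions.
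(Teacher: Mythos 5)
Your proposal is correct and follows essentially the same route as the paper: assume the canonical labeling, observe that $\cross(i,j)$ equals the multiplicity of $(j,i)$ in the inversion multiset of the canonical representative (by the pattern analysis in Proposition \ref{construction}), and then sum over pairs to get the total inversion count, which is the rank. Your write-up simply makes explicit the pattern-to-geometry dictionary and the relabeling invariance that the paper leaves implicit in its ``without loss of generality.''
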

\begin{proof}
Assume without loss of generality that $B$ has the same labeling as the canonical representative of $g(B)$. Let $s$ denote this canonical representative. Then $\cross(i,j)$ is exactly the multiplicity of $(j,i)$ in $\invm(s)$. Finally, note that $\lvert \invm(s)\rvert = \vert\inv(\iota(s))\rvert = \rho(g(B))$, where elements are counted with multiplicity in the inversion multiset.
\end{proof}

\begin{figure}
    \centering
    \includegraphics[width=0.95\textwidth]{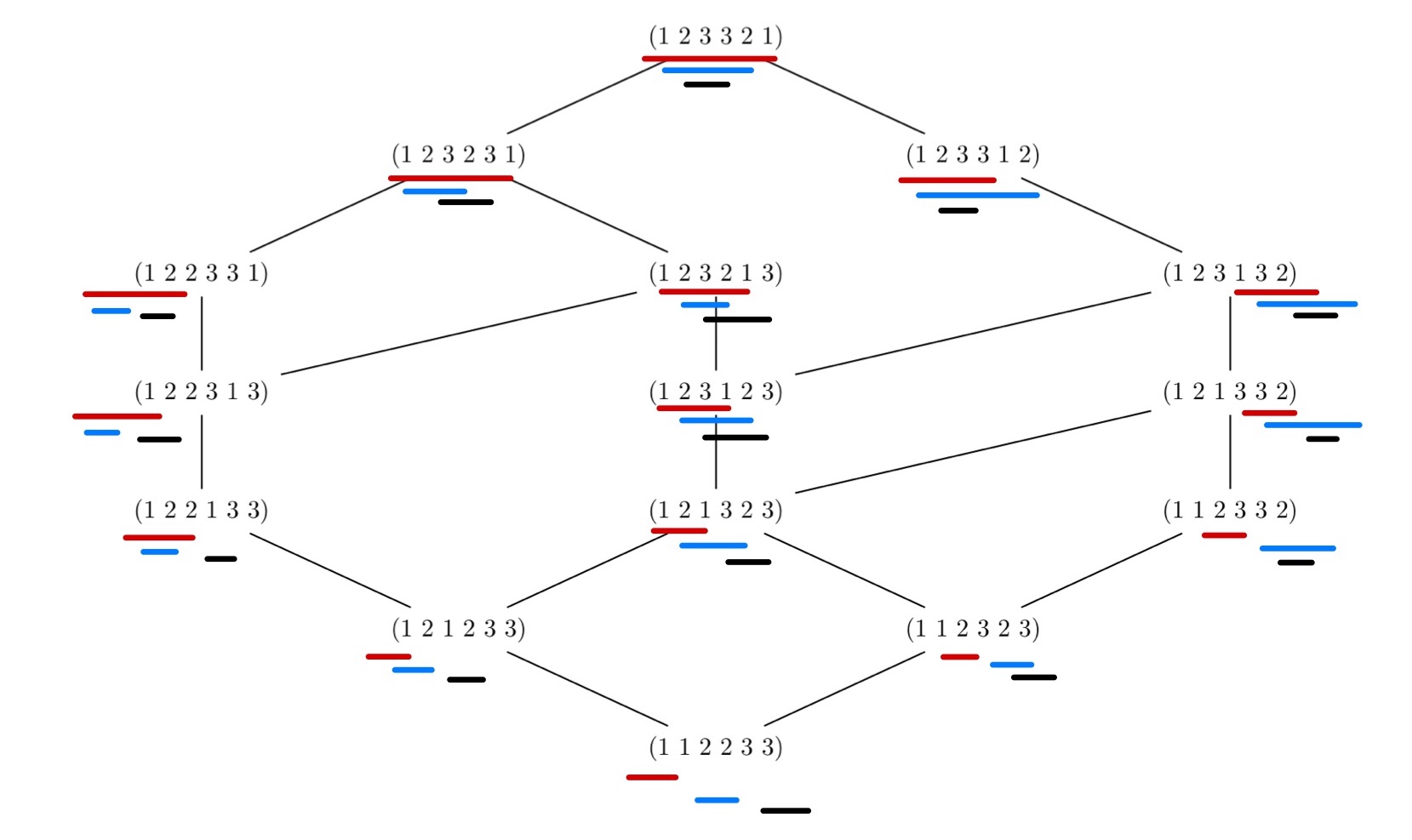}
\caption{Hasse diagram of $L(2^3)/\mathfrak{S}_3$. Below each element, $[s]$, is a barcode diagram for which $g(B) =[s]$, illustrating Corollary \ref{crossing_num_cor}.}
\label{hasse_barcode}
\end{figure}

Our first main result is that the barcode poset, $L(2^n)/\mathfrak{S}_n$, is a isomorphic to principal ideal of the multinomial Newman lattice $L(2^n)$ and hence is a lattice.

\begin{theorem}\label{lattice_thm}
The barcode poset $(L(2^n)/\mathfrak{S}_n, \leq_u)$ is isomorphic to the principal ideal of the multinomial Newman lattice, $L(2^n)$ generated by the ``fully nested" permutation: $(1~2~\ldots~(n-1)~n~n~(n-1)~\ldots~2~1)$. Consequently, $(L(2^n)/\mathfrak{S}_n, \leq_u)$ is a lattice.
\end{theorem}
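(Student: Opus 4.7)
The plan is to exhibit $\psi \colon L(2^n)/\mathfrak{S}_n \hookrightarrow L(2^n)$ as an order-isomorphism onto the principal ideal generated by the fully nested permutation $\omega = (1~2~\ldots~n~n~(n-1)~\ldots~1)$. Recall that $\leq_u$ was defined on $L(2^n)/\mathfrak{S}_n$ precisely by pulling back the Newman order through $\psi$, so $\psi$ is tautologically order-preserving and order-reflecting; and we already observed in the text that $\psi$ is injective. Thus the whole content of the theorem is the set-theoretic identification
\begin{equation*}
\mathrm{image}(\psi) \;=\; \{\,s \in L(2^n) : s \le \omega\,\}.
\end{equation*}

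The key intrinsic characterization is that an element $s \in L(2^n)$ is a canonical representative (i.e.\ lies in $\mathrm{image}(\psi)$) if and only if the first copy of $i$ precedes the first copy of $j$ in $s$ whenever $i < j$. Translating this through $\iota$ and writing $A_{ij} = \{(j_1,i_1),(j_1,i_2),(j_2,i_1),(j_2,i_2)\}$ as in the proof of Proposition \ref{construction}, this condition is equivalent to requiring that $(j_1,i_1) \notin \inv(\iota(s))$ for all $i < j$.

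The next step is a direct computation of $\iota(\omega)$ and its inversion set. Since the two copies of each label in $\omega$ already appear in lex order, $\iota(\omega) = (1_1~2_1~\ldots~n_1~n_2~(n-1)_2~\ldots~1_2)$. For each pair $i<j$, the positions $i,\,j,\,2n{+}1{-}j,\,2n{+}1{-}i$ of $i_1,j_1,j_2,i_2$ satisfy $i<j<2n{+}1{-}j<2n{+}1{-}i$, so $\inv(\iota(\omega)) \cap A_{ij} = \{(j_1,i_2),(j_2,i_2)\}$. Comparing this with the trichotomy from the proof of Proposition \ref{construction}, which says that for any canonical representative $s$ one has $\inv(\iota(s)) \cap A_{ij} \in \bigl\{\emptyset,\,\{(j_1,i_2)\},\,\{(j_1,i_2),(j_2,i_2)\}\bigr\}$, both inclusions follow at once. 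Forward: any canonical $s$ satisfies $\inv(\iota(s)) \cap A_{ij} \subseteq \inv(\iota(\omega)) \cap A_{ij}$ for every pair, so $s \le \omega$. Reverse: if $s \le \omega$ then $(j_1,i_1) \notin \inv(\iota(s))$ for every $i<j$, which by the characterization above forces $s$ to be a canonical representative.

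Having identified the image of $\psi$ with a principal ideal of $L(2^n)$, the final ``Consequently'' clause is the standard fact that a principal ideal of a lattice is itself a lattice under the inherited order. The main subtlety is not so much difficulty as bookkeeping: one must check that the blockwise analysis of $\inv(\iota(s))$ extracted from the proof of Proposition \ref{construction} is actually a two-sided characterization of canonical representatives, which is what allows the converse inclusion to go through. Once that is in hand, the proof is essentially a matching of two combinatorial descriptions of the same set.
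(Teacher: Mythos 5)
Your proof is correct and takes essentially the same route as the paper: both characterize $\image(\psi)$ as exactly those $s\in L(2^n)$ with $(j_1,i_1)\notin\inv(\iota(s))$ for all $i<j$ (i.e.\ $\tau_s=\Id$), compare against $\inv(\iota(\omega))$ via the $A_{ij}$ blocks from Proposition \ref{construction}, and conclude $\image(\psi)=I(\omega)$. The only cosmetic difference is that the paper obtains the forward inclusion more quickly by asserting that $\hat 1$ is the maximal element of the quotient and appealing to order-preservation of $\psi$, whereas you carry out explicitly the blockwise inversion-set comparison that underlies that maximality claim.
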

\begin{proof}
From the multiset definition, it is clear that $(L(2^n)/\mathfrak{S}_n, \leq_u)$ contains a maximal element $\hat{1}$, with canonical representative $\psi(\hat{1}) = (1~2~\ldots~~n~n~\ldots~2~1)$.
By abuse of notation let $\hat{1}$ also denote this canonical representative.
We wish to show that the map $\psi$ is an isomorphism onto the principal ideal generated by $\hat{1}$, which we denote $I(\hat{1})$.
We have already established that $\psi$ is injective, so we need only establish that $\image(\psi) = I(\hat{1})$. Since $\psi$ is order preserving and $\hat{1}$ is maximal, it follows that $\psi(s) \leq \psi(\hat{1})$ for all $s\in (L(2^n)/\mathfrak{S}_n, \leq_u)$. Hence $\image(\psi) \subseteq I(\hat{1})$.

Now, let $s \in I(\hat{1})$. Let $\tau_s\in \mathfrak{S}_n$ be the permutation given by the string of birth times in $s$; recall that $\psi$ is defined as $\psi([s]) = \tau_s^{-1} \circ s$. If $\tau_s$ is not the identity permutation, then it follows that there exists a pair $i<j$ for which the first copy of $j$ appears before the first copy of $i$ in $s$. Hence, $(j_1,i_1)\in \inv(\iota(s))$. However, $\psi(s)\leq \psi(\hat{1})$ implies that $\inv(\iota(s)) \subseteq \inv(\iota(\hat{1}))$ and $(k_1, \ell_1) \notin \inv(\iota(\hat{1}))$ for any $k>\ell$. Hence, we have a contradiction. Therefore it must be the case that $\tau_s  =\Id_n$.
Finally, note that $\psi([s]) = \tau_s^{-1}\circ s =s$, so $s\in \image(\psi)$. Thus, $I(\hat{1}) \subseteq \image(\psi)$, completing the proof.
\end{proof}

\section{Power $k$ Barcode Lattice}\label{big_k}
In this section we generalize the construction in Section \ref{reps}, producing an entire family of multipermutations associated to barcodes. We will show that these new multipermutations share the same lattice structure as before, while also providing increasingly detailed information about the arrangement of the bars in a barcode. Ultimately, we show that for a large class of barcodes, these discrete invariants can provide bounds on two classical, continuous metrics on barcodes: the Wasserstein and bottleneck distances.

We first recall for the reader a mapping, defined in \cite{kanari2020trees, tressII}, from the space of strict barcodes with $n$ bars to the symmetric group $\mathfrak{S}_n$. Let $B\in \mathcal{B}_{st}^n$; by ordering the death times increasingly such that $d_{i_1} < d_{i_2} < \ldots <d_{i_n}$, the indexing set $[n]$ gives a permutation $\sigma_d \in \mathfrak{S}_n$ defined by $\sigma_B(k) = i_k$, i.e., $\sigma_B$ is the unique permutation such that $d_{\sigma_B(1)} < d_{\sigma_B(2)} < \ldots <d_{\sigma_B(n)}$. Ordering the birth times gives another permutation $\tau_B$, which is exactly $\tau_{f(B)}$ under the notation we used to define the canonical representative map $\psi$ in Section \ref{poset}. Thus, we have a map $\phi: \mathcal{B}_{st}^n \to \mathfrak{S}_n$ given by $\phi(B) = \tau_B^{-1} \cdot \sigma_B$ which tracks the ordering of the death values with respect to the birth values.

For example, if $B_1$ is the strict barcode with 3 bars given by $b_2 = 1.0,\, d_2 = 2.0,\, b_1 = 1.5,\, d_1 = 3.0,\, b_3 = 2.5,\, d_3 = 2.75$, then the birth/death times in $T$ are ordered: $b_2 < b_1 < d_2 < b_3 < d_3 < b_1$. So $\tau_B = (2~1~3)$, $\sigma_B = (2~3~1)$ and $\phi(B) = (1~3~2)$.

The map $\psi(g(B))$ provides an alternate invariant to $\phi(B)$, in this case a multipermutation, which in fact contains $\phi(B)$ as a sub-permutation; $\phi(B)$ is exactly the permutation you get by looking at the second occurrence of each element in $\psi(g(B))$.

\begin{prop}\label{kanari_containment}
Let $B\in \mathcal{B}_{st}^n$ and let $\phi$ denote the map from $\mathcal{B}_{st}^n \to \mathfrak{S}_n$ defined in \cite{kanari2020trees}. Then $\phi(B)$ is a sub-permutation of the multipermutation $\psi(g(B))$.
\end{prop}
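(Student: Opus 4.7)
The plan is to trace through the definitions of $\psi \circ g$ and $\phi$ carefully and show that the subsequence of $\psi(g(B))$ formed by reading off the second occurrence of each label is precisely $\phi(B)$. First, I would observe from the construction in Section~\ref{perm} that $f(B)$ is obtained by listing the labels of the $2n$ birth/death times in increasing time order. Consequently, the subsequence of first occurrences in $f(B)$ is exactly $\tau_B$ (as a sequence in one-line notation), since $\tau_B(k)$ is by definition the bar whose birth is the $k$-th smallest; similarly, the subsequence of second occurrences in $f(B)$ is $\sigma_B$.

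Next I would use the identity $\psi(g(B)) = \tau_B^{-1} \circ f(B)$, where $\circ$ denotes the element-wise action of $\mathfrak{S}_n$ on $L(2^n)$ given in Section~\ref{poset}. Because the action is applied entry by entry, it preserves the position of every occurrence: the sequence of first occurrences of $\psi(g(B))$ is $(\tau_B^{-1}(\tau_B(1)), \ldots, \tau_B^{-1}(\tau_B(n))) = \Id$, recovering the defining property of the canonical representative. More importantly, the sequence of second occurrences of $\psi(g(B))$ is
\begin{equation*}
(\tau_B^{-1}(\sigma_B(1)), \ldots, \tau_B^{-1}(\sigma_B(n))) = \tau_B^{-1} \cdot \sigma_B,
\end{equation*}
which is exactly $\phi(B)$ by definition. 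This gives the embedding of $\phi(B)$ as a subsequence (in the natural sense of \emph{sub-permutation}) of the multipermutation $\psi(g(B))$.

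The only point requiring any care is verifying that the element-wise $\mathfrak{S}_n$-action preserves the ordered list of ``which occurrence is which'' — i.e., that the $k$-th occurrence of label $\ell$ in $f(B)$ becomes the $k$-th occurrence of label $\tau_B^{-1}(\ell)$ in $\psi(g(B))$, with the same positional index. This is immediate from the definition of the action, so I do not anticipate any substantive obstacle; the proof is essentially a direct unpacking of the definitions of $f$, $\psi$, $g$, $\tau_B$, $\sigma_B$, and $\phi$.
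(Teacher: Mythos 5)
Your argument is correct and is essentially a careful spelling-out of the paper's own (implicit) reasoning: the paper states the proposition without a formal proof, relying on the preceding observation that $\phi(B)$ is the subsequence of second occurrences of $\psi(g(B))$. You correctly identify that the subsequence of second occurrences of $f(B)$ is $\sigma_B$ (since $b_i < d_i$ forces the second occurrence of label $i$ to be $d_i$), and that the element-wise action of $\tau_B^{-1}$ preserves occurrence positions while relabeling, giving $\tau_B^{-1}\cdot\sigma_B = \phi(B)$.
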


It follows that the invariant $\psi(g(B))$ is more sensitive than the invariant $\phi(B)$ because it captures the relative positions of the birth times along with the death times. This begs the question: \emph{is there a further generalization of this construction where we consider more points in each bar rather than just the birth/death times?}

To that end, we must first determine a sensible way of selecting more points from each bar. A natural choice is to take the endpoints of all the intervals we get when splitting each bar into $2^k$ sub-intervals of equal length, where $k \in \mathbb{Z}_{\geq 0}$. For instance,  when $k=0$ this gives just the endpoints of each bar, which produces the barcode lattice, and when $k=1$ this gives us the endpoints and midpoint of each bar. For general $k$, we get the $(2^k+1)$-many points $\big\{b_i + \ell\frac{d_i-b_i}{2^k}: \ell= 0,\ldots, 2^k\big\}$ for each bar $(b_i, d_i)$.
We consider this choice natural because the points given by higher values of $k$ contain all points given by smaller values.

One problem with this method is that even strict barcodes might produce repeated points when $k>0$. For example, if $B = \{(-1,1), (-2,2)\}$, then although all birth/death times are distinct, the two bars both have midpoint $0$. In order to form combinatorial invariants analogous to the $k=0$ case, we require that situations like this do not occur. For that reason, we define the notion of \emph{$k$-strict barcodes}.

\begin{definition}
A barcode $B = \{(b_i,d_i)\}_{i=1}^n$ is called \emph{$k$-strict} if $$\big\{b_i + \ell\frac{d_i-b_i}{2^k}: \ell= 0,\ldots, 2^k\big\} \bigcap \big\{b_j + \ell\frac{d_j-b_j}{2^k}:\ell= 0,\ldots, 2^k\big\} = \emptyset$$ for all $i\neq j$. We denote the set of $k$ strict barcodes with $n$ bars by $\mathcal{B}_k^n$ and note that $\mathcal{B}_{0}^n = \mathcal{B}_{st}^n$. Furthermore, we let $\mathcal{B}_{\infty}^n = \bigcap_{k=0}^\infty \mathcal{B}_k^n$, and call this the space of \emph{fundamentally strict} barcodes with $n$ bars.
\end{definition}

\begin{remark}
Although the definition of fundamental strictness seems quite severe, in practice barcodes arising from statistical processes like TDA will often be fundamentally strict when the initial data comes from some continuous distribution.
\end{remark}
We may now define analogues of the maps $f$ and $g$ from Section \ref{perm} for higher values of $k$.

Let $k\in \mathbb{Z}_{\geq0}$ and let $B = \{(b_i,d_i)\}_{i=1}^n$ be a $k$-strict barcode with $n$ bars. Taking the union of the points $\big\{b_i + \ell\frac{d_i-b_i}{2^k}: \ell= 0,\ldots, 2^k\big\}$ for each $i \in [n]$ produces $n(2^{k}+1)$ points: $t_{i_1} <t_{i_2} < \ldots < t_{i_{n(2^k+1)}}$. Thus, we can define the map $f: \mathcal{B}_k^n \to L((2^k+1)^n)$, where $L((2^k+1)^n) = L((2^k+1), \ldots, (2^k+1))$, such that $$f_k(B) = (i_1~i_2~\ldots~i_{n(2^k+1)}).$$

As before, let $\mathfrak{S}_{n}$ act element-wise on the multipermutations in $L((2^k+1)^n)$. Then define the map $g_k : \mathcal{B}_k^n \to L((2k+1)^n) / \mathfrak{S}_{n}$ such that $g_k(B) = [f(B)]$, where again $[s]$ denotes the orbit of $s$ under $\mathfrak{S}_{n}$. Finally, define the canonical representative map $\psi_s: L((2^k+1)^n) / \mathfrak{S}_{n} \to L((^2k+1)^n)$ such that $\psi_k([s]) = \tau_s^{-1} \circ s$, where, as before, $\tau_s$ is the permutation in $\mathfrak{S}_{n}$ resulting from the ordering of the birth times of each bar in $s$. By the same argument as before, the map $\psi_k$ is injective and does not depend on the choice of representative $s$.

Thus, $\psi_k$ defines an embedding of $L((2^k+1)^n) / \mathfrak{S}_{n}$ into $L((2^k+1)^n)$, which induces a poset structure on $L((2^k+1)^n) / \mathfrak{S}_{n}$, which we call the \emph{power $k$ barcode poset}.

\begin{definition}
Let $[s],[t] \in L((2^k+1)^n)/ \mathfrak{S}_n$ and let $\psi_k: L((2^k+1)^n)/\mathfrak{S}_n \to L((2^k+1)^n)$ be the map which sends each orbit to its canonical representative. Then, let $\leq_k$ be denote relation given by $[s] \leq_k [t]$ if and only if $\psi_k([s]) \leq \psi_k([t])$, where $\leq$ denotes the multinomial Newman order. We call the pair $(L((2^k+1)^n)/\mathfrak{S}_n, \leq_k)$ the \emph{power $k$ barcode poset} or \emph{power $k$ barcode lattice} (we will see in a moment that this name is justified). Moreover, if $B$ is a $k$-strict barcode we say that $\psi_k(g_k(B))$ is the \emph{power $k$ invariant of $B$}.
\end{definition}
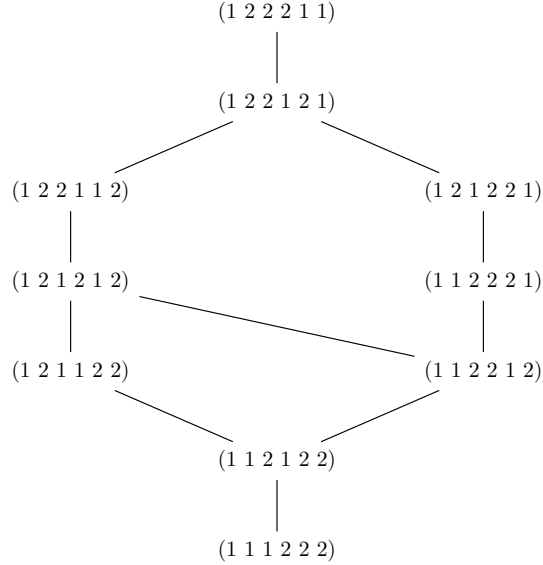
\begin{figure}[h]
    \centering
\begin{tikzcd}[scale cd = 0.75]
                                  & (1~2~2~2~1~1)                                         &                                                       \\
                                  & (1~2~2~1~2~1) \arrow[u, no head]                      &                                                       \\
(1~2~2~1~1~2) \arrow[ru, no head] &                                                       & (1~2~1~2~2~1) \arrow[lu, no head]                     \\
(1~2~1~2~1~2) \arrow[u, no head]  &                                                       & (1~1~2~2~2~1) \arrow[u, no head]                      \\
(1~2~1~1~2~2) \arrow[u, no head]  &                                                       & (1~1~2~2~1~2) \arrow[u, no head] \arrow[llu, no head] \\
                                  & (1~1~2~1~2~2) \arrow[ru, no head] \arrow[lu, no head] &                                                       \\
                                  & (1~1~1~2~2~2) \arrow[u, no head]                      &
\end{tikzcd}
    \caption{Hasse diagram of $L(3^2)/\mathfrak{S}_2$.}
    \label{multi_hasse}
\end{figure}

Recall that there exists a map $\iota$, defined earlier, which embeds $L(2^n)$ into $\mathfrak{S}_{2n}$. An analogous map, which by abuse of notation we will also refer to as $\iota$, embeds $L((2^k+1))^n$ into $\mathfrak{S}_{n(2^k+1)}$ by identifying $\mathfrak{S}_{n(2^k+1)}$ with permutations of the set $S = \{1_1,1_2, \ldots, 1_{2^k+1}, \ldots, n_{2^k+1}\}$.
Hence, as before, we have that,
\begin{equation}
    [s] \leq_k [t] 
    \iff \inv(\iota \circ \psi_k([s]))\subseteq \inv(\iota \circ \psi_k([t])).
\end{equation}

As in the $k=0$ case, we claim that the power $k$ barcode poset is in fact a lattice.
We note that the proof of Theorem \ref{multi-k} below is a direct generalization of the arguments in the proof of Theorem \ref{lattice_thm}.

\begin{theorem}\label{multi-k}
The power $k$ barcode poset $(L((2^k+1)^n)/\mathfrak{S}_n, \leq_k)$ is isomorphic to a principal ideal of the multinomial Newman lattice, $L((2^k+1)^n)$. Consequently, $(L((2^k+1)^n)/\mathfrak{S}_n, \leq_k)$  is a lattice.
\end{theorem}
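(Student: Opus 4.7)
The plan is to generalize the argument from the proof of Theorem \ref{lattice_thm} essentially verbatim, with $2^k+1$ playing the role of $2$. The three ingredients I need are: (i) an explicit description of the maximum element of $(L((2^k+1)^n)/\mathfrak{S}_n, \leq_k)$; (ii) the fact that $\psi_k$ is, by construction, an injective order-preserving map from the poset into $L((2^k+1)^n)$; and (iii) the ``first-copies-in-order'' characterization of canonical representatives built into the definition of $\psi_k$.

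For (i), I claim that the maximum element $\hat{1}_k$ has canonical representative
$$\psi_k(\hat{1}_k) \;=\; \bigl(1\,\,2\,\,\cdots\,\,n\,\,\underbrace{n\,\cdots\,n}_{2^k}\,\,\underbrace{(n-1)\,\cdots\,(n-1)}_{2^k}\,\,\cdots\,\,\underbrace{1\,\cdots\,1}_{2^k}\bigr),$$
which specializes to the fully nested permutation $(1~2~\ldots~n~n~\ldots~2~1)$ when $k=0$. To verify that this is indeed the maximum, I will directly compute $\inv(\iota(\hat{1}_k)) = \{(j_l,i_m) : j > i,\; m \geq 2\}$, and then argue that this set contains $\inv(\iota(s))$ for every canonical representative $s$. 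The crucial observation is that no canonical representative can contain an inversion of the form $(j_l, i_1)$ with $j > i$ and $l \geq 2$: the canonical-form conditions force $i_1$ to appear before $j_1$, which in turn must appear before $j_l$ by the lex ordering on $S$.

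With $\hat{1}_k$ in hand, the claim $\image(\psi_k) = I(\hat{1}_k)$ decomposes into two containments. The inclusion $\image(\psi_k) \subseteq I(\hat{1}_k)$ is immediate from the inversion-set description above together with the order-preserving property of $\psi_k$. For the reverse containment, I mimic the $\tau_s = \Id_n$ argument from Theorem \ref{lattice_thm}: take $s \in I(\hat{1}_k)$ and let $\tau_s \in \mathfrak{S}_n$ be the permutation read off from the order of first occurrences in $s$. If $\tau_s \neq \Id_n$, then some pair $i < j$ has $j_1$ appearing before $i_1$ in $s$, so $(j_1, i_1) \in \inv(\iota(s))$; but by construction $\inv(\iota(\hat{1}_k))$ contains no such inversion, contradicting $s \leq \hat{1}_k$. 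Hence $\tau_s = \Id_n$, and so $\psi_k([s]) = \tau_s^{-1}\circ s = s$, giving $s \in \image(\psi_k)$.

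The main obstacle is part (i), namely verifying that $\hat{1}_k$ as defined has exactly the inversion set $\{(j_l,i_m) : j > i,\; m \geq 2\}$ and that this dominates every canonical representative. Once this combinatorial check is dispatched, the rest of the argument is a mechanical rewrite of the $k=0$ proof, and the lattice conclusion follows from the standard fact that a principal ideal in a lattice is itself a lattice.
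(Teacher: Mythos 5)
Your proposal is correct and follows essentially the same route as the paper's own proof: identify $\hat 1_k$ as the ``fully nested'' multipermutation, use injectivity and order-preservation of $\psi_k$ for $\image(\psi_k)\subseteq I(\hat 1_k)$, and the $\tau_s=\Id_n$ argument for the reverse containment. The only difference is that you spell out the inversion set $\inv(\iota(\hat 1_k))=\{(j_\ell,i_m): j>i,\ m\geq 2\}$ explicitly to justify maximality, whereas the paper asserts it more briefly; this is a welcome tightening rather than a different approach.
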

\begin{proof}
As in the $k=0$, it is clear that $(L((2^k+1)^n)/\mathfrak{S}_n, \leq_k)$ contains a maximal element $\hat{1}$. In general, the canonical representative of $\hat{1}$ is the multipermutation
$\psi_k(\hat{1})$ whose one-line notation is the integers $1$ to $n$, followed by the remaining $2^k$ copies of $n$, followed by the remaining $2^k$ copies of $(n-1)$, and so on, terminating with the remaining $2^k$ copies of 1. For example, when $k=1$ and $n=3$, we have $3$ copies of each integer and $\psi_k(\hat{1}) = (1~2~3~3~3~2~2~1~1)$. It is clear that this element is maximal, as every pair of elements is inverted except the first copy of each integer which we require appear in order.

By abuse of notation, let $\hat{1}$ denote its canonical representative $\psi_k(\hat{1})$.
We claim that the map $\psi_k$ is an isomorphism onto the principal ideal generated by $\hat{1}$, which we denote $I(\hat{1})$.

We have already established that $\psi_k$ is injective, so we need only establish that $\image(\psi_k) = I(\hat{1})$. Since $\psi_k$ is order preserving and $\hat{1}$ is maximal, it follows that $\psi_k(s) \leq \psi_k(\hat{1})$ for all $s\in (L(2^n)/\mathfrak{S}_n, \leq_k)$. Hence $\image(\psi_k) \subseteq I(\hat{1})$.

Now, let $s \in I(\hat{1})$. Let $\tau_s\in \mathfrak{S}_n$ be the permutation given by the string of birth times in $s$; recall that $\psi_k$ is defined as $\psi_k([s]) = \tau_s^{-1} \circ s$. If $\tau_s$ is not the identity permutation, then it follows that there exists a pair $i<j$ for which the first copy of $j$ appears before the first copy of $i$ in $s$. Hence, $(j_1,i_1)\in \inv(\iota(s))$. However, $\psi_k(s)\leq \psi_k(\hat{1})$ implies that $\inv(\iota(s)) \subseteq \inv(\iota(\hat{1}))$ and $(k_1, \ell_1) \notin \inv(\iota(\hat{1}))$ for any $k>\ell$. Hence, we have a contradiction. Therefore it must be the case that $\tau_s  =\Id_n$.

Finally, note that $\psi_k([s]) = \tau_s^{-1}\circ s =s$, so $s\in \image(\psi_k)$. Thus, $I(\hat{1}) \subseteq \image(\psi_k)$, completing the proof.
\end{proof}

\subsection{Increasing Descriptive Power of $g_k(B)$}\label{geom}

In the same way that the invariant $\phi(B)$ is a sub-permutation of $\psi(g(B))$, the invariant $\psi_j(g_j(B))$ is a sub-permutation of $\psi_k(g_k(B))$ for all $k > j$. Specifically, note that if we delete every other occurrence (beginning with the second) of $i$ in $f_{k+1}(B)$, for each $i\in[n]$, then the resulting multipermutation is precisely $f_k(B)$. Hence, we have a map $\delta_k: (L((2^{k+1}+1)^n) \to (L((2^k+1)^n)$ such that $\delta_k \circ f_{k+1} = f_k$.

For instance,  consider the barcode $B$ given by $b_1 = 1.0,\, d_1 = 2.5,\, b_2 = 1.5,\, d_2 = 4.0,\, b_3 = 3.0,\, d_3 = 3.5$. Taking $k=1$, we add the points $m_1 = 1.75,\, m_2 = 2.75,\,m_3 = 3.25$ so $f_1(B) = (1~2~1~1~2~3~3~3~2)$. The map $\delta_0$ deletes the points $m_i$, which gives $(\delta_0 \circ f_1)(B) = (1~2~1~3~3~2)= f_0(B)$.

Hence, we have the following lemma.
\begin{lemma}\label{inclusion_lem}
Let $B_1, B_2 \in \mathcal{B}_k^n$. If $g_k(B_1) = g_k(B_2)$, then $g_j(B_1) =g_j(B_2)$ for all $j <k$.
\end{lemma}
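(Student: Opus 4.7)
The plan is to reduce the general statement to the consecutive case $j = k-1$ by induction, and then exploit the map $\delta_{k-1}$ that the author introduced in the paragraph preceding the lemma. First, I want to note that the hypothesis $B_1, B_2 \in \mathcal{B}_k^n$ ensures $g_j(B_1), g_j(B_2)$ are even defined for $j < k$: the points $\{b_i+ \ell(d_i-b_i)/2^j : 0 \le \ell \le 2^j\}$ form a subset of $\{b_i+\ell(d_i-b_i)/2^k : 0 \le \ell \le 2^k\}$ because $2^j \mid 2^k$, so $k$-strictness implies $j$-strictness.

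The heart of the proof is showing that $\delta_{k-1}$ descends to a well-defined map on orbits, giving a map $\bar\delta_{k-1} : L((2^k+1)^n)/\mathfrak{S}_n \to L((2^{k-1}+1)^n)/\mathfrak{S}_n$ that fits into a commutative square with $g_k$ and $g_{k-1}$. For this I need $\mathfrak{S}_n$-equivariance of $\delta_{k-1}$: if $\pi \in \mathfrak{S}_n$ and $s \in L((2^k+1)^n)$, then the positions of the symbol $i$ in the multipermutation $\pi \circ s$ are exactly the positions of $\pi^{-1}(i)$ in $s$. The prescription ``delete every other occurrence, starting with the second, of each symbol'' therefore removes the same positions whether applied to $s$ first and then relabeled by $\pi$, or applied to $\pi \circ s$ directly. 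In formulas, $\delta_{k-1}(\pi \circ s) = \pi \circ \delta_{k-1}(s)$, so $\delta_{k-1}$ sends orbits to orbits.

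Combining this with the geometric identity $\delta_{k-1} \circ f_k = f_{k-1}$ established in the preceding discussion, I get
\[
\bar\delta_{k-1}(g_k(B)) = \bar\delta_{k-1}([f_k(B)]) = [\delta_{k-1}(f_k(B))] = [f_{k-1}(B)] = g_{k-1}(B)
\]
for any $B \in \mathcal{B}_k^n$. Consequently $g_k(B_1) = g_k(B_2)$ implies $g_{k-1}(B_1) = g_{k-1}(B_2)$ by applying $\bar\delta_{k-1}$ to both sides. Iterating this $k-j$ times (using that each $\mathcal{B}_k^n \subseteq \mathcal{B}_{k-1}^n \subseteq \cdots \subseteq \mathcal{B}_j^n$) yields $g_j(B_1) = g_j(B_2)$, which is the claim.

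The only genuine content here is the equivariance check in the middle paragraph; everything else is bookkeeping. I do not anticipate a serious obstacle — the lemma is essentially a formal consequence of how $\delta_k$ is defined, together with the observation that the $\mathfrak{S}_n$-action only renames symbols and thus cannot interfere with an ``every other occurrence'' rule that is blind to the name of a symbol.
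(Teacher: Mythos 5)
Your proof is correct and follows the paper's intended approach: the paper states the lemma as an immediate consequence of the relation $\delta_k \circ f_{k+1} = f_k$ without writing out a formal proof. You supply the one nontrivial detail left implicit, namely that $\delta_{k-1}$ is $\mathfrak{S}_n$-equivariant and hence descends to a map on orbits $\bar\delta_{k-1}$ satisfying $\bar\delta_{k-1} \circ g_k = g_{k-1}$, which is exactly what is needed to pass from the statement about $f$ to the statement about $g$.
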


Thus, we see that increasing $k$ amounts to producing ever more sensitive invariants $g_k(B)$. These higher order invariants capture more nuanced information about the overlaps of pairs of bars. For instance, we have seen that if a barcode $B$ contains two nested bars then $g_0(B)$ will contain the pattern $(1~2~2~1)$. Going up a level, $g_1(B)$ confirms that the bars are nested but also tells us whether bar 2 is contained in the left half of bar 1, in the right half of bar 1, or whether it straddles the midpoint of 1 (see Figure \ref{nested_bars}).

In fact, we see that all the possible patterns of 1's and 2's tell you a distinct way the two bars intersect and how those intersections relate to the two halves of the bars.
By the same logic, higher values of $k$ provide even more granular intersection data, now relating to quartiles ($k=2$), octiles ($k=3$), etc., of the bars.


\begin{figure}[h]
     \centering
     \begin{subfigure}[b]{0.4\textwidth}
         \centering
         \includegraphics[width=\textwidth]{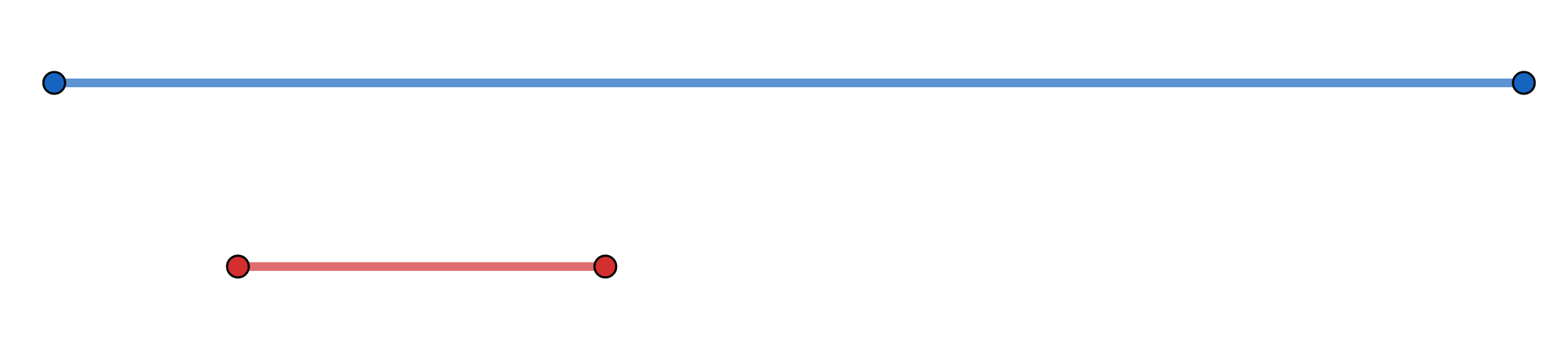}
         \caption{Left-nested bars, $g_1(B) = (1~2~2~2~1~1)$.}
         \label{left-nested}
     \end{subfigure}
     \hfill
     \begin{subfigure}[b]{0.4\textwidth}
         \centering
         \includegraphics[width=\textwidth]{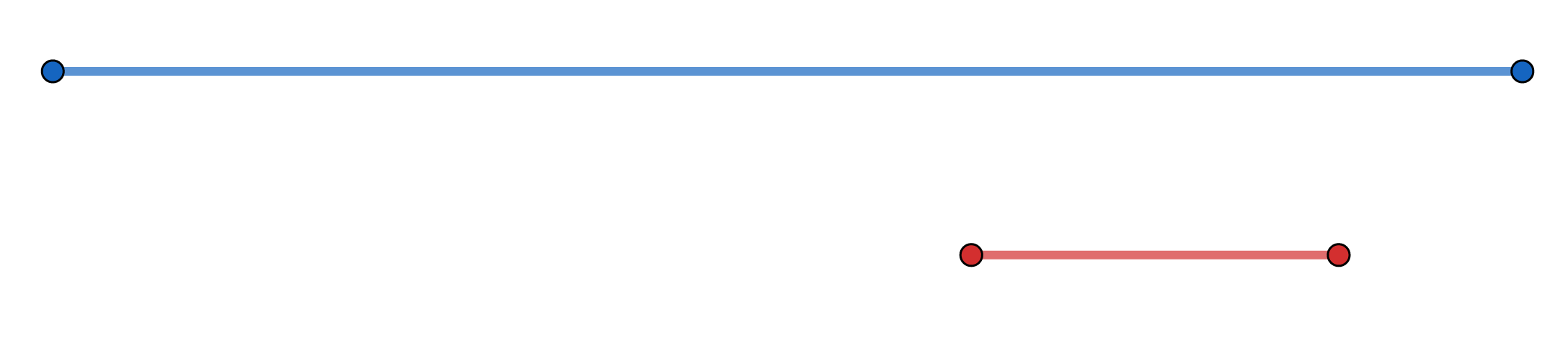}
         \caption{Right-nested bars, $g_1(B) = (1~1~2~2~2~1)$.}
         \label{right-nested}
     \end{subfigure}
        \caption{Two barcodes with the same power $0$ invariant, $g_k(B) = (1~2~2~1)$, but different power $1$ invariants.}
        \label{nested_bars}
\end{figure}


\subsection{Connection to Bottleneck and Wasserstein Distances}\label{distance}
From the observations above, it is clear that the power $k$ invariant of a barcode $B$ captures increasingly granular information about the relative positions of the bars as $k$ increases. In fact, we will show that these invariants are closely related to classical metrics between barcodes such as the bottleneck and Wasserstein distances for a special family of barcodes.

Before explaining further, we first recall for the reader the notion of an interval graph \cite{Lekkeikerker1962}.
\begin{definition}
Let $B = \{(b_i,d_i)\}_{i=1}^n$ be a barcode. The \emph{interval graph} of $B$ is the simple graph $G_B(V,E)$ where $V = [n]$ and an edge $(i,j)$ is in $E$ if and only if $(b_i,d_i) \cap(b_j,d_j) \neq \emptyset$.
\end{definition}

\begin{lemma}\label{graph_lem}
Let $B,B'$ be strict barcodes. If $g_k(B) = g_k(B')$ for some $k\geq 0$ then $G_B \cong G_{B'}$.
\end{lemma}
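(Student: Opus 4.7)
The plan is to reduce to the case $k=0$ and then show that the power-$0$ invariant alone already determines the interval graph up to isomorphism. By Lemma \ref{inclusion_lem}, the hypothesis $g_k(B) = g_k(B')$ implies $g_0(B) = g_0(B')$, so it suffices to treat the case $k=0$.

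The key observation is that the edges of $G_B$ can be read off directly from the multipermutation $f_0(B)$. For bars $(b_i,d_i)$ and $(b_j,d_j)$ with $b_i < b_j$, the intervals intersect if and only if $\cross(i,j) \geq 1$, which, by the pattern classification used in the proof of Proposition \ref{construction}, occurs exactly when the two copies of $i$ and the two copies of $j$ are \emph{interleaved} in $f_0(B)$, that is, appear in the pattern $i \ldots j \ldots i \ldots j$ (stepped) or $i \ldots j \ldots j \ldots i$ (nested), as opposed to the disjoint pattern $i \ldots i \ldots j \ldots j$. Consequently, the edge set $E(G_B)$ is a function solely of the pair-interleaving data of $f_0(B)$.

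Since $g_0(B) = g_0(B')$ means the $\mathfrak{S}_n$-orbits of $f_0(B)$ and $f_0(B')$ coincide, there exists a permutation $\pi \in \mathfrak{S}_n$ with $f_0(B) = \pi \circ f_0(B')$. I will define $\varphi : V(G_{B'}) \to V(G_B)$ by $\varphi(i) = \pi(i)$; this is automatically a vertex bijection. Applying $\pi$ element-wise merely relabels the letters of $f_0(B')$ without altering positional order, so the pair $\{i,j\}$ is interleaved in $f_0(B')$ if and only if $\{\pi(i),\pi(j)\}$ is interleaved in $f_0(B)$. Combining this with the previous paragraph gives $\{i,j\} \in E(G_{B'}) \iff \{\varphi(i),\varphi(j)\} \in E(G_B)$, so $\varphi$ is the desired graph isomorphism.

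The proof is essentially routine once the right viewpoint is adopted; the only point needing care is the bookkeeping step, namely recognizing that an $\mathfrak{S}_n$-equivalence of multipermutations amounts to a simultaneous relabeling of the letters and hence preserves exactly the pair-statistics on which $G_B$ depends. No appeal to the lattice structure or to the canonical representative $\psi_0$ is needed — the intersection information is already an $\mathfrak{S}_n$-invariant of $f_0(B)$.
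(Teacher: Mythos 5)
Your argument is correct and matches the paper's proof in spirit: reduce to $k = 0$ via Lemma~\ref{inclusion_lem}, then note that adjacency of $i$ and $j$ in $G_B$ is read off from whether the two copies of $i$ and the two copies of $j$ are interleaved in the multipermutation (equivalently, whether $\cross(i,j)\geq 1$). You are slightly more explicit than the paper in that you construct the graph isomorphism directly from a permutation $\pi$ realizing the orbit equality $f_0(B) = \pi \circ f_0(B')$, whereas the paper leaves this step implicit by passing through the canonical representative $\psi_0$; both are sound.
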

\begin{proof}
It is clear that the intersection of bars $i$ and $j$ can be determined from $\cross(i,j)$. Recall one can deduce $\cross\#(i,j)$ from the power 0 invariant of a barcode for all $i\neq j$. Therefore, the power 0 invariant, and by Proposition \ref{inclusion_lem} any power $k$ invariant, completely determines the interval graph of its associated barcode.
\end{proof}

\begin{theorem}\label{affine}
Let $B,B'$ be fundamentally strict barcodes with $n$ bars, where $B =\{(b_i,d_i)\}_{i=1}^n$ and $B' =\{(b_i',d_i')\}_{i=1}^n$. If $g_k(B) = g_k(B')$ for all $k\in \mathbb{N}$ and $G_B$ (equivalently $G_{B'}$) is connected, then there exist constants $\alpha>0$ and $\delta \in \mathbb{R}$ such that $B = \alpha B' +\delta$, where $ \alpha B' +\delta := \{(\alpha b_i'+ \delta,\alpha d_i' + \delta): i\in [n]\}$.
\end{theorem}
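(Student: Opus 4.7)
My plan is to promote the all-$k$ combinatorial coincidence into a single continuous affine map between the two barcodes. First, I observe that for strict barcodes the witnessing relabeling is unique: since every label $i\in[n]$ appears in every multipermutation $f_k(B)$, whenever $g_k(B)=g_k(B')$ there is a unique $\sigma_k\in\mathfrak{S}_n$ with $\sigma_k\circ f_k(B)=f_k(B')$. Applying the forgetful map $\delta_k$ from Section \ref{geom}, which commutes with the element-wise $\mathfrak{S}_n$-action, shows $\sigma_{k+1}$ also realizes the level-$k$ equality, so by uniqueness $\sigma_k$ is independent of $k$. After relabeling $B'$ by this common $\sigma$—which leaves it unchanged as a multiset of intervals—we may assume $f_k(B)=f_k(B')$ for every $k\in\mathbb{N}$.

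For each $i\in[n]$ let $\phi_i(x)=\alpha_i x+\delta_i$ be the unique increasing affine bijection with $\phi_i(b_i)=b_i'$ and $\phi_i(d_i)=d_i'$; in particular $\alpha_i>0$. Writing $p_i^k(\ell)=b_i+\ell(d_i-b_i)/2^k$ for the dyadic subdivision points of bar $i$ (and $p_i'^k(\ell)$ similarly in $B'$), the definition of $\phi_i$ gives $\phi_i(p_i^k(\ell))=p_i'^k(\ell)$. The equality $f_k(B)=f_k(B')$ then translates directly into the order-preservation statement
$$
p_i^k(\ell_1)<p_j^k(\ell_2)\iff \phi_i(p_i^k(\ell_1))<\phi_j(p_j^k(\ell_2))
$$
for all $i,j\in[n]$, $k\geq 0$, and $\ell_1,\ell_2\in\{0,\ldots,2^k\}$.

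The main obstacle is to bootstrap this dyadic order-preservation up to the identity $\phi_i=\phi_j$ whenever bars $i,j$ overlap. Fix such a pair and pick $x$ in the open intersection $(b_i,d_i)\cap(b_j,d_j)$. Suppose for contradiction that $\phi_i(x)\neq\phi_j(x)$, say $\phi_i(x)<\phi_j(x)$. By continuity, one can choose $\eta>0$ small enough that $(x-\eta,x+\eta)\subset(b_i,d_i)\cap(b_j,d_j)$ and $\phi_i(x+\eta)<\phi_j(x-\eta)$. For $k$ large, the dyadic spacings $(d_i-b_i)/2^k$ and $(d_j-b_j)/2^k$ are both less than $\eta$, so I may select dyadic points $p_i^k(\ell_i)\in(x,x+\eta)$ and $p_j^k(\ell_j)\in(x-\eta,x)$. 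Then $p_j^k(\ell_j)<p_i^k(\ell_i)$, yet
$$
\phi_j(p_j^k(\ell_j))>\phi_j(x-\eta)>\phi_i(x+\eta)>\phi_i(p_i^k(\ell_i)),
$$
contradicting order-preservation. Hence $\phi_i=\phi_j$ on the open interval $(b_i,d_i)\cap(b_j,d_j)$, and since two affine maps agreeing on an interval of positive length are equal, $\phi_i\equiv\phi_j$ on $\mathbb{R}$.

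Finally, the connectedness of $G_B$ provides, for any $i,j\in[n]$, a path $i=i_0,i_1,\ldots,i_m=j$ of overlapping bars; chaining the previous step along this path yields $\phi_i=\phi_j$ globally. Letting $\phi(x)=\alpha x+\delta$ denote this common affine map (with $\alpha>0$), we obtain $b_i'=\alpha b_i+\delta$ and $d_i'=\alpha d_i+\delta$ for every $i$, i.e., $B'=\alpha B+\delta$ as multisets. Inverting gives $B=(1/\alpha)B'-\delta/\alpha$, which is the desired form with positive scaling.
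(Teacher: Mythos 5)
Your proof is correct and reaches the same conclusion via the same key ingredients (density of dyadic subdivision points, limiting $k\to\infty$, connectivity of the interval graph), but the organization is genuinely different and has some advantages. The paper anchors a single affine map $T$ at one fixed bar, then propagates along a spanning path of $G_B$ verifying $T(b_i')=b_i$ and $T(d_i')=d_i$ one bar at a time; notably, it needs a case split for the death time $d_i$ depending on whether it lies inside or outside the anchor bar, and the ``repeat the argument along the path'' step is left somewhat implicit. You instead introduce a per-bar affine map $\phi_i$ and prove pairwise agreement $\phi_i=\phi_j$ whenever bars $i,j$ overlap, using the fact that two affine functions agreeing on an interval of positive length coincide; connectivity then finishes it cleanly. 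This avoids the case split entirely and makes the propagation along the graph a trivial step rather than an implicit iteration. Your first paragraph — proving that the witnessing relabeling $\sigma_k$ is unique (since the $\mathfrak{S}_n$-action is free) and stable across $k$ (since $\delta_k$ intertwines the action) — does real work that the paper handles more economically by just passing to the canonical representative $\psi_k(g_k(\cdot))$; both are valid, and the canonical-representative route is shorter. One thing the paper's direct estimate buys that your contradiction argument does not is a quantitative bound $|b_i - T(b_i')| < (d_1 - b_1)/2^k$, which is then reused essentially verbatim in the proof of Theorem \ref{convergence}; your qualitative argument would have to be re-quantified to obtain that corollary.
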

\begin{proof}
Without loss of generality assume that $B,B'$ are labeled according to increasing birth time, that is to say $b_1 <b_2 < \ldots <b_n$, and likewise for $B'$. Now, let $\alpha = \frac{d_1-b_1}{d_1'-b_1'}$, $\delta = b_1 -\alpha b_1'$, and define $T: \mathbb{R} \to \mathbb{R}$ be function $T(x) = \alpha x +\delta$. Observe that $T(b_1')  = b_1$, and that
\begin{align*}
    T(d_1') = \frac{d_1'(d_1-b_1)}{d_1'-b_1'} + b_1 - \frac{b_1'(d_1-b_1)}{d_1'-b_1'}
    = \frac{(d_1'-b_1')(d_1-b_1)}{d_1'-b_1'} +b_1
    = d_1.
\end{align*}
Now, let $i$ be a neighbor of 1 in $G_B$ (and hence in $G_{B'}$, by Lemma \ref{graph_lem}). We know at least one such $i$ exists since $G_B$ is connected. Take $k>0$ to be arbitrary and let $m$ denote the number of $1$'s that appear before the first $i$ in $\psi_k(g_k(B))$, or equivalently in $\psi_k(g_k(B'))$. Note $0 < m < 2^{k}$ since necessarily $b_1 < b_i < d_1$.
Thus, we have that
\begin{alignat*}{2}
    b_1 +(m-1)\frac{d_1-b_1}{2^k} &< \, b_i\, &&< b_1 +m\frac{(d_1-b_1)}{2^k}, \text{ and} \\
    b_1' +(m-1)\frac{d_1'-b_1'}{2^k} &<\, b_i'\, &&< b_1' +m\frac{(d_1'-b_1')}{2^k}.
\end{alignat*}
Since $\alpha >0$, it follows that
\begin{alignat*}{2}
    \alpha \Big(b_1' +(m-1)\frac{d_1'-b_1'}{2^k}\Big) +\delta &<\, T(b_i') \, &&< \alpha \Big(b_1' +m\frac{(d_1'-b_1')}{2^k}\Big) +\delta \\
    \implies b_1 +\frac{(m-1)(d_1-b_1)}{2^k} &<\, T(b_i') \,&&< b_1 +\frac{m(d_1-b_1)}{2^k}.
\end{alignat*}
Therefore, $\lvert b_i - T(b_i')\rvert < \frac{d_1-b_1}{2^k}$. Sending $k\to\infty$, it follows that $T(b_i') =b_i$.

Now, if $b_1 < d_i < d_1$, then repeating the argument above gives $T(d_i') = d_i$. Otherwise, we have that $b_1 < b_i < d_1 <d_i$. Let $\ell$ be the number of $i$'s that appear before the last 1 in $\psi_k(g_k(B))$. Note that $0 < \ell < 2^k+1$, then we have
\begin{alignat*}{2}
    \frac{\ell(d_i-b_i)}{2^k}
    &<\, d_1 -b_i \,&&<
    \frac{(\ell+1)(d_i-b_i)}{2^k}, \text{ and} \\
    \frac{\ell(d_i'-b_i')}{2^k}
    &<\, d_1' -b_i' \,&&<
    \frac{(\ell+1)(d_i'-b_i')}{2^k}.
\end{alignat*}
Again, as $\alpha >0$, it follows that,
\begin{alignat*}{2}
    \frac{\ell(\alpha d_i'- \alpha b_i')}{2^k}
    &<\, \alpha d_1' -\alpha b_i' \,&&<
    \frac{(\ell+1)(\alpha d_i'- \alpha b_i')}{2^k},
    \\
    \implies \frac{\ell(T(d_i')-T(b_i'))}{2^k}
    &<\, T(d_1') -T(b_i') \,&&<
    \frac{(\ell+1)(T(d_i')-T(b_i'))}{2^k},
    \\
    \implies
    \frac{\ell(T(d_i')-b_i)}{2^k}
    &<\, d_1-b_i\,&&<
    \frac{(\ell+1)(T(d_i')-b_i)}{2^k}.
\end{alignat*}
Rearranging the inequalities above gives,
\begin{alignat*}{2}
    \frac{\ell}{2^k}
    &<\, \frac{d_1 -b_i}{d_i-b_i} \,&&<
    \frac{(\ell+1)}{2^k}, \text{ and} \\
    \frac{\ell}{2^k}
    &<\, \frac{d_1 -b_i}{T(d_i')-b_i} \,&&<
    \frac{(\ell+1)}{2^k}.
\end{alignat*}
Sending $k\to \infty$ it follows that $\frac{d_1 -b_i}{d_i-b_i} = \frac{d_1 -b_i}{T(d_i')-b_i}$, from which we see that $T(d_i') =d_i$.
Hence, if $i$ is a neighbor of $1$, then $(T(b_i'), T(d_i')) = (b_i,d_i)$.
Finally, note that we can repeat the arguments above to show that this holds for all vertices connected to 1 by some finite path. Since $G_B$ is connected and finite, this gives the desired result.
\end{proof}

Thus, we see that the entire family of power $k$ invariants completely determine a barcode $B$ up to an affine transformation. Moreover, with an extra assumption on $B,B'$, we get bounds on the rate of convergence.
\begin{theorem}\label{convergence}
	Let $B,B'$ be $k$-strict barcodes with $n$ bars such that $g_k(B) = g_k(B')$. Suppose there exists a bar $(b_*,d_*) \in B$ (or equivalently in $B'$) which contains all others, that is to say $b_* \leq b_i$ and $d_* \geq d_i$ for all $i\in [n]$.
	Then there exist constants $\alpha > 0$ and $\delta \in \mathbb{R}$ such that
\begin{align*}
    d_\infty(B, \alpha B' +\delta) &\leq \frac{\lvert d_*-b_*\rvert}{2^k}, \text{ and} \\
d_q(B, \alpha B' +\delta) &\leq (n-1)^{\frac{1}{q}} \frac{\lvert d_*-b_*\rvert}{2^k}.
\end{align*}
\end{theorem}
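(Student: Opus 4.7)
The plan is to adapt the argument in the proof of Theorem \ref{affine}, using the containing bar $(b_*,d_*)$ as the reference interval but stopping at finite $k$ rather than passing to the limit. The containment hypothesis is crucial because it guarantees that every birth time and every death time in $B$ (and likewise in $B'$) lies strictly inside the interval $(b_*, d_*)$, so each endpoint is pinned down by $g_k$ to one of the $2^k$ equal-length subintervals carved out by the level-$k$ subdivision points of $(b_*,d_*)$. This localization is what converts the combinatorial equality $g_k(B) = g_k(B')$ into a metric bound of the shape $(d_* - b_*)/2^k$.

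First I would choose compatible labelings. Since $g_k(B) = g_k(B')$ are the same orbit under the $\mathfrak{S}_n$ action, I can relabel $B'$ so that $f_k(B) = f_k(B')$, and simultaneously relabel so that the containing bar is indexed $1$; thus $(b_1, d_1) = (b_*, d_*)$ and $(b_1', d_1') = (b_*', d_*')$. Then set $\alpha = (d_1 - b_1)/(d_1' - b_1')$, $\delta = b_1 - \alpha b_1'$, and $T(x) = \alpha x + \delta$, exactly as in Theorem \ref{affine}. A direct calculation (the one already carried out there) shows $T(b_1') = b_1$ and $T(d_1') = d_1$, so the bar labeled 1 in $\alpha B' + \delta$ coincides identically with the bar labeled 1 in $B$ and contributes $0$ to any matching cost.

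Next, for each $i \neq 1$, the containment assumption gives $b_1 < b_i < d_1$ and $b_1 < d_i < d_1$, and by Lemma \ref{graph_lem} (together with the fact that $g_0$ distinguishes nestings via crossing numbers) the same strict containments hold in $B'$. Let $m \in \{1, \ldots, 2^k\}$ be the number of copies of $1$ appearing before the first copy of $i$ in the common multipermutation $\psi_k(g_k(B)) = \psi_k(g_k(B'))$. Then
\begin{align*}
b_1 + (m-1)\tfrac{d_1-b_1}{2^k} &< b_i < b_1 + m \tfrac{d_1-b_1}{2^k}, \\
b_1' + (m-1)\tfrac{d_1'-b_1'}{2^k} &< b_i' < b_1' + m \tfrac{d_1'-b_1'}{2^k}.
\end{align*}
Applying the orientation-preserving map $T$ to the second line and using $T(b_1') = b_1$ together with $\alpha(d_1' - b_1') = d_1 - b_1$ places $T(b_i')$ in the \emph{same} subinterval of length $(d_1 - b_1)/2^k$ as $b_i$, hence $|b_i - T(b_i')| < (d_1 - b_1)/2^k$. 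The identical argument applied to the last copy of $i$ (or equivalently, to the subinterval containing $d_i$) yields $|d_i - T(d_i')| < (d_1 - b_1)/2^k$. Combining via the $\ell^\infty$ norm gives $\lVert (b_i, d_i) - (T(b_i'), T(d_i')) \rVert_\infty \leq |d_* - b_*|/2^k$ for every $i \neq 1$.

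Finally, I would use the labeled matching $\gamma : B \to \alpha B' + \delta$ that pairs bar $i$ of $B$ with the $T$-image of bar $i$ of $B'$. Bar 1 contributes $0$ to the cost, and each of the remaining $n-1$ bars contributes at most $|d_* - b_*|/2^k$. The bottleneck bound follows by taking the maximum, and the $q$-Wasserstein bound follows from
\begin{equation*}
d_q(B, \alpha B' + \delta)^q \leq \sum_{i=2}^{n} \Bigl(\tfrac{|d_* - b_*|}{2^k}\Bigr)^q = (n-1)\Bigl(\tfrac{|d_* - b_*|}{2^k}\Bigr)^q.
\end{equation*}
The main obstacle is the localization step in the middle paragraph — verifying that $g_k(B) = g_k(B')$ really does force $b_i$ and $T(b_i')$ (and $d_i$ and $T(d_i')$) into a \emph{common} subinterval of length $(d_1 - b_1)/2^k$. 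Everything else is bookkeeping; the containment hypothesis is exactly what makes this reduction to a single reference bar valid at finite $k$.
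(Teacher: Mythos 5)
Your proposal is correct and follows essentially the same route as the paper's proof: label so the containing bar is bar $1$, define $T(x) = \alpha x + \delta$ with the same $\alpha$ and $\delta$, use the count $m$ of $1$'s before the first $i$ in $\psi_k(g_k(B)) = \psi_k(g_k(B'))$ to localize $b_i$ and $T(b_i')$ to a common subinterval of length $(d_1 - b_1)/2^k$, do the same for the death times, and then bound $d_\infty$ by the max and $d_q$ by summing over the $n-1$ non-reference bars. The only cosmetic difference is that you invoke Lemma \ref{graph_lem} and crossing numbers to see that the containment structure transfers to $B'$, whereas the paper reads this directly off $\psi_k(g_k(B)) = \psi_k(g_k(B'))$; both are valid.
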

\begin{proof}
	Without loss of generality assume that $B,B'$ are labeled according to increasing birth time, that is to say $b_1 <b_2 < \ldots <b_n$, and likewise for $B'$. Note that this implies that $(b_*,d_*) = (b_1,d_1)$. Moreover, since $\psi_k(g_k(B)) = \psi_k(g_k(B'))$ we also have that $b_1' \leq b_i'$ and $d_1' \geq d_i'$ for all $i\in [n]$.
	Now, let $\alpha = \frac{d_1-b_1}{d_1'-b_1'}$, $\delta = b_1 -\alpha b_1'$, and define $T: \mathbb{R} \to \mathbb{R}$ be function $T(x) = \alpha x +\delta$. Observe that $T(b_1')  = b_1$, and that $T(d_1') =d_1$. Now let $(b_i, d_i)$ be another bar and let $m$ denote the number of $1$'s that appear before the first $i$ in $\psi_k(g_k(B))$, or equivalently in $\psi_k(g_k(B'))$. Note $0 < m < 2^{k}$ since $b_1 < b_i < d_1$. 	Then we have that,
\begin{alignat*}{2}
    b_1 +(m-1)\frac{d_1-b_1}{2^k} &< \, b_i\, &&< b_1 +m\frac{(d_1-b_1)}{2^k}, \text{ and} \\
    b_1' +(m-1)\frac{d_1'-b_1'}{2^k} &<\, b_i'\, &&< b_1' +m\frac{(d_1'-b_1')}{2^k},
\end{alignat*}
and since $\alpha >0$, it follows that
$$
b_1 +\frac{(m-1)(d_1-b_1)}{2^k} <\, T(b_i') \,< b_1 +\frac{m(d_1-b_1)}{2^k}.
$$
Therefore, $\lvert b_i - T(b_i')\rvert\leq \frac{d_1-b_1}{2^k}$. Recall that $d_1 \geq d_i$ for all $i\in [n]$, so by repeating the argument we get that $\lvert d_i - T(d_i')\rvert\leq \frac{d_1-b_1}{2^k}$. Thus, $d_\infty(B, \alpha B' +\delta) \leq \frac{\lvert d_*-b_*\rvert}{2^k}$. For the bound on the $q$-Wasserstein distance, observe that:
\begin{alignat*}{2}
	\Big( \sum_{i=1}^n \| (b_i,d_i) - (T(b_i') -T(d_i')\|_\infty^q \Big)^{\frac{1}{q}}
	\leq \Big( \sum_{i=2}^n \big(\frac{d_1-b_1}{2^k}\big)^q \Big)^{\frac{1}{q}}
	&=  \Big( (n-1)(\frac{d_1-b_1}{2^k}\big)^q \Big)^{\frac{1}{q}} \\
	&= (n-1)^{\frac{1}{q}} \frac{d_1-b_1}{2^k},
\end{alignat*}
from which the result follows.
\end{proof}
\begin{remark}
	We note that the extra assumption in Theorem \ref{convergence} is necessary. Consider the barcodes $B = \{(0,1), (1-\epsilon, 1+\epsilon)\}$ and $B' = \{(0,1), (1-\epsilon, 2)\}$, where $\epsilon << \frac{1}{2^k}$.  Let $T(x) = \alpha x + \delta$, with $\alpha = \frac{d_1-b_1}{d_1'-b_1'}$ and $\delta = b_1 -\alpha b_1'$ as in Theorems \ref{affine} and \ref{convergence}. Note $T$ is simply the identity map, so $\lvert T(2) - 1+\epsilon\rvert = 1-\epsilon$. Hence $d_\infty(B, \alpha B' +\delta) = 1-\epsilon$. Thus, for fixed $\lvert d_*-b_*\rvert$ and arbitrary $k$ we can find a barcode $B'$ such that $\psi_k(g_k(B)) = \psi_k(g_k(B'))$ and $d_\infty(B, \alpha B' +\delta)$ is arbitrarily close to 1.
\end{remark}

\subsection{Barcode Polytopes}
It is a well known result that the permutahedron $\mathfrak{S}_n$ is also the face lattice of the polytope $P_{\mathfrak{S}_n} = \conv\{(\pi_1, \ldots, \pi_n) \in \mathbb{R}^n : \pi \in \mathfrak{S_n}\}$. Recall that by composing the maps $\iota \circ \psi_k$ we get an embedding of $L((2^k+1)^n)/\mathfrak{S}_{n}$ into $P_{\mathfrak{S}_{n(2^k+1)}}$. This gives us a new polytope, $P_{n,k} = \conv\{ (\pi_1, \ldots, \pi_{n(2^k+1)}) \in \mathbb{R}^{n(2^k+1)} : \pi \in \image(\iota \circ \psi_k)\}$. We call $P_{n,k}$ the \emph{power-$k$ barcode polytope}.

Because this embedding sends $L((2^k+1)^n)/\mathfrak{S}_{n}$ to a prime-ideal, the polytope $P_{n,k}$ is an example of a \emph{Bruhat interval polytope} \cite{Williams2015}.
\begin{definition}
	Let $u \leq v$ be permutations in $\mathfrak{S}_n$. The Bruhat interval polytope $Q_{u,v}$ is the convex hull of all permutation vectors $(z_1, z_2, ..., z_n)$ with $u \leq z \leq v$.
\end{definition}
Note that $P_{n,k}$ is equal to $Q_{u,v}$ for $u = e \in \mathfrak{S}_{n(2k+1)}$ and $v$ the “fully nested” permutation $(1_1~2_1~\ldots~n_1~n_2~\ldots~n_{2k+1}~(n-1)_2~\ldots~1_{2^k}~1_{2^k +1})$, where
again we identify $\mathfrak{S}_{n(2k+1)}$ with permutations of the totally ordered set $1_1 < 1_2 <\ldots n_1 <\ldots<n_{2^k+1}$.

In \cite{Williams2015}, the authors prove, among other things, the following formula for computing the dimension of a Bruhat interval polytope.
	Let $u \leq v$ be permutations in $\mathfrak{S}_n$, and let $C : u = x_0 \lessdot  x_1 \lessdot \ldots \lessdot x_\ell = v$ be any maximal chain from $u$ to $v$.
	Define a labeled graph $G^C$ on $[n]$ having an edge between vertices $a$ and $b$ if and only if $x_i(ab) = x_{i+1}$ for some $0 \leq i \leq \ell - 1$. Define $\Pi_C = {V_1, V_2, ..., V_r}$ to be the partition of $[n]$  whose blocks $V_j$ are the connected components of $G^C$.The authors show that the number of blocks does not depend on the choice of maximal chain $C$, so we let $\#\Pi_{u,v}$ denote the number of blocks, $r$. The authors then prove the following.
	
\begin{theorem}[\cite{Williams2015}]
    The dimension of the Bruhat interval polytope $Q_{u,v}$ is $(n -\#\Pi_{u,v}).$
\end{theorem}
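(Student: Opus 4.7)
The plan is to compute $\dim Q_{u,v}$ by identifying its affine hull explicitly, using the graph $G^C$ attached to any maximal chain $C$. Since each vertex is a permutation vector, $Q_{u,v}$ already lies in the hyperplane $\sum_i z_i = \binom{n+1}{2}$. I would establish (i) a lower bound $\dim Q_{u,v} \geq n - \#\Pi_{u,v}$ by exhibiting enough independent edge directions taken from $C$, (ii) an upper bound $\dim Q_{u,v} \leq n - \#\Pi_{u,v}$ by producing $\#\Pi_{u,v}$ independent linear equations satisfied on $Q_{u,v}$, and then (iii) conclude that $\#\Pi_C$ is chain-independent because the polytope's dimension is.

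For step (i), fix a maximal chain $C: u = x_0 \lessdot x_1 \lessdot \cdots \lessdot x_\ell = v$, so that each cover satisfies $x_{i+1} = x_i \cdot (a_i \, b_i)$ for some transposition of positions, and hence $x_{i+1} - x_i$ is a nonzero multiple of $e_{a_i} - e_{b_i}$. Each such vector lies in the linear span of $Q_{u,v} - u$. Since the blocks $V_1, \ldots, V_r$ of $\Pi_C$ are by definition the connected components of the edge set of $G^C$, the span of $\{e_a - e_b : \{a,b\} \text{ is an edge of } G^C\}$ equals $\bigoplus_{j=1}^r \{w \in \mathbb{R}^{V_j} : \sum_{i \in V_j} w_i = 0\}$, which has dimension $\sum_j (|V_j| - 1) = n - r$.

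For step (ii), I would show that for each block $V_j$ the linear functional $z \mapsto \sum_{i \in V_j} z_i$ is constant on $Q_{u,v}$; these $r$ functionals are supported on disjoint nonempty subsets of $[n]$, hence linearly independent, and they cut the ambient space down to dimension $n - r$. Constancy follows once every vertex $z \in [u,v]$ is connected to $u$ by a sequence of covers inside $[u,v]$, each of which swaps positions lying in a common block of $\Pi_C$. Equivalently, one must verify that any cover relation $z \lessdot z \cdot (ab)$ occurring anywhere in $[u,v]$ has $\{a,b\}$ contained in some block of $\Pi_C$. The cleanest route is to prove directly that any two maximal chains in $[u,v]$ produce the \emph{same} partition of $[n]$: it is classical that any two maximal chains in a finite Bruhat interval are connected by a sequence of local braid/polygon moves on the underlying Coxeter generators, and a short case analysis shows each such move replaces a small segment of covers by another segment whose position-swaps preserve the connected components of the associated graph.

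The main obstacle is precisely this chain-invariance step, which is the only genuinely combinatorial ingredient; once it is granted, the rest is elementary linear algebra on edge directions. With invariance in hand, (i) and (ii) agree to give $\dim Q_{u,v} = n - \#\Pi_{u,v}$, and the intrinsic nature of polytope dimension then confirms that $\#\Pi_C$ depends only on the interval $[u,v]$, not on the chain $C$ used to compute it, justifying the notation $\#\Pi_{u,v}$.
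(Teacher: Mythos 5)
This statement is cited from Tsukerman and Williams \cite{Williams2015} and is not proved in the paper, so there is no in-paper argument to compare against; taking the proposal on its own terms, your two-sided architecture (lower bound from the edge directions $e_a-e_b$ of a fixed maximal chain, upper bound from the block-sum functionals $z\mapsto\sum_{i\in V_j}z_i$) is the right shape, and you correctly isolate chain-invariance of the partition as the load-bearing lemma. The gap is the justification of that lemma. Covers in the strong Bruhat order have the form $z\lessdot z\,t_{ab}$ for \emph{arbitrary} transpositions, not simple reflections, so maximal chains in $[u,v]$ are not reduced words and the Tits word property (``braid moves on the underlying Coxeter generators'') does not apply to them. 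The flip-connectivity you actually need is a consequence of two substantive facts that your sketch does not cite: length-$2$ Bruhat intervals are diamonds (so a flip is well-defined), and Bruhat intervals are $EL$-/$CL$-shellable (Bj\"orner--Wachs, Dyer), which lets one flip any maximal chain lexicographically down to the unique increasing chain of the labeling and hence connects any two maximal chains. Without these (or a substitute), the upper bound is unsupported.

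The ``short case analysis'' you defer does go through once made explicit. In a diamond $x\lessdot y_1,y_2\lessdot z$ with $y_1=x\,t_{ab}$, $z=y_1\,t_{cd}$, $y_2=x\,t_{ef}$, $z=y_2\,t_{gh}$, one has $t_{ab}\,t_{cd}=t_{ef}\,t_{gh}$. If $\{a,b\}\cap\{c,d\}=\emptyset$ the product has cycle type $(2,2)$ and uniqueness of cycle decomposition forces $\{\{e,f\},\{g,h\}\}=\{\{a,b\},\{c,d\}\}$, so the two edge sets coincide; if they share one index the product is a $3$-cycle supported on a $3$-set $T$, and any factorization of a $3$-cycle into two transpositions uses two of the three edges of the triangle on $T$, so both pairs of edges connect exactly $T$. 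In every case the flip preserves connected components. Supplying this case analysis and replacing the word-property citation with thinness plus shellability of Bruhat intervals would complete the argument.
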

From this result, it is easy to compute the dimension of the barcode
polytopes $P_{n,k}$.

\begin{cor}\label{polytope_dim}
The dimension of the power-$k$ barcode polytope, $P_{n,k}$ is $n(2^k+1) -2$.
\end{cor}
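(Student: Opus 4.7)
The plan is to apply the Williams et al.\ dimension formula cited in the paper directly to $P_{n,k}$, which the authors have already identified as the Bruhat interval polytope $Q_{e,v}$ in $\mathfrak{S}_{n(2^k+1)}$, where $v$ is the ``fully nested'' permutation $(1_1~2_1~\ldots~n_1~n_2~\ldots~n_{2^k+1}~(n-1)_2~\ldots~1_{2^k+1})$. Since that formula gives $\dim P_{n,k} = n(2^k+1) - \#\Pi_{e,v}$, the entire task reduces to computing $\#\Pi_{e,v}$, i.e., the number of connected components of $G^C$ for any one maximal chain $C$ from $e$ to $v$.

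The first step is to observe that along a maximal chain in weak Bruhat order, every cover $x_i \lessdot x_{i+1}$ transposes an adjacent-in-position pair of values $a<b$ in one-line notation, creating exactly one new inversion $(b,a)$. Thus along $C$ we add one edge per element of $\inv(v)$, and the edge set of $G^C$ is precisely the set of unordered pairs $\{a,b\}$ with $(b,a)\in \inv(v)$. In particular $G^C$ coincides with the ``inversion graph'' of $v$ and is independent of the choice of maximal chain; the remaining question is purely combinatorial.

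Next I would analyze this inversion graph vertex-by-vertex. The letter $1_1$ is both the minimum of $S$ in the lexicographic total order $1_1 \lessdot 1_2 \lessdot \ldots \lessdot n_{2^k+1}$ and the first entry of $v$, so it cannot be the larger nor the later element of any inversion; hence $\{1_1\}$ forms a singleton component. For the remaining $n(2^k+1)-1$ vertices, the structure of $v$ provides abundant edges: for any $i\neq j$ and any $p,q\geq 2$, the second-or-later copies $i_p$ and $j_q$ both lie in the ``reversed block'' tail of $v$ (blocks ordered from $n$ down to $1$), so exactly one of $(i_p,j_q)$, $(j_q,i_p)$ is an inversion. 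This already makes $\{i_p : i\in [n],\, p\geq 2\}$ into a single connected component of $G^C$ (essentially a complete multipartite graph). Finally, each first copy $j_1$ with $j\geq 2$ sits among the first $n$ entries of $v$ while any $i_p$ with $i<j$ and $p\geq 2$ sits in the tail, producing the inversion $(j_1, i_p)$ and hence an edge from $j_1$ into the big component.

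Combining these observations gives $\#\Pi_{e,v}=2$, and the Williams et al.\ formula yields $\dim P_{n,k} = n(2^k+1)-2$, as desired. The only step needing care — and the one I would treat as the main obstacle — is verifying that the two families of inversions above really do reach every non-$1_1$ vertex (in particular, all $1_p$ for $p\geq 2$ and all $j_1$ for $j\geq 2$); but this is a short case check once the ``reversed block'' structure of $v$ is written out, and no subtler machinery is needed.
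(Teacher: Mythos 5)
Your approach is correct in outcome and is genuinely different from the paper's. The paper picks a specific ``bubble-sort''-style maximal chain, observes that it uses every simple transposition except $s_1$, and reads off $\#\Pi_{u,v}=2$ from the support of that one chain; you instead argue that for any weak-order chain the relevant graph is the inversion graph of $v$, and then analyze that graph directly. Yours is the more chain-agnostic argument, and your vertex-by-vertex analysis of the inversion graph of the ``fully nested'' $v$ is correct: $1_1$ is isolated, all later copies $\{i_p : p \geq 2\}$ are mutually linked across blocks, and each $j_1$ with $j\geq 2$ connects into that block via an inversion with some $i_p$, $i<j$, $p\geq 2$.

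One point deserves more care, though. The theorem as quoted defines $G^C$ by the condition $x_i(ab)=x_{i+1}$, i.e.\ via right multiplication, so its edges record the \emph{positions} being transposed, not the values. For a weak-order chain the positions transposed at each step are always adjacent, so $G^C$ is the ``support graph'' $\{\{i,i+1\}: s_i \leq v\}$, not the inversion graph (which has edges between values). Your sentence ``the edge set of $G^C$ is precisely the set of unordered pairs $\{a,b\}$ with $(b,a)\in\inv(v)$'' is therefore not literally accurate under that convention. Fortunately the two graphs always induce the same partition of $[N]$ into connected components: both decompose $[N]$ into the maximal intervals $[a,b]$ on which $v$ restricts to a permutation of $[a,b]$ (the ``indecomposable blocks'' of $v$). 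Showing this equivalence is a short but nontrivial lemma (one must check that components of the inversion graph are intervals, and that $s_i\leq v$ iff some inversion of $v$ straddles $i,i+1$). So to make your argument airtight you should either add that lemma, or switch to the position-based support graph, which for this $v$ and this chain is handled more directly exactly as in the paper's proof. Either way the count $\#\Pi_{u,v}=2$ and the final dimension $n(2^k+1)-2$ are correct (assuming $n\geq 2$, an implicit hypothesis in both your proof and the paper's).
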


\begin{proof}
    Recall, $P_{n,k} = Q_{u,v}$ for $u = e \in \mathfrak{S}_{n(2k+1)}$ and $v$ the “fully nested” permutation $(1_1~2_1~\ldots~n_1~n_2~\ldots~n_{2k+1}~(n-1)_2~\ldots~1_{2^k}~1_{2^k +1})$. Consider the maximal chain $C$ from $u$ to $v$ given by moving each element into position one by one, starting with the 1’s in descending lexicographic order, then the 2’s is descending order, and so on. Note that traversing this chain requires that we use all adjacent transpositions except $(12)$ since the $1_1$ term does not move. Hence, $\Pi_C = \{\{1_1\}, \{1_2,\ldots, 1_{2k+1}, 2_1, \ldots,n_{2k+1}\}$. Therefore $\Pi_{u,v} = 2$, from which the result follows.
\end{proof}

\section{Conclusion and Future Work}\label{conclusion}
In this paper we developed a new family of combinatorial invariants of barcodes by mapping barcodes into various equivalence classes of multipermutations. We then studied the poset structure of these equivalence classes. We showed that the rank of one such poset has an elegant interpretation in terms of a crossing number for barcodes. Moreover, we show that these posets in fact lattices, specifically the face lattices of interval polytopes in the classic permutahedron. 
Finally, we showed that these multipermutations can provide bounds on the bottleneck and Wasserstein distances for a large class for barcodes. In this way, they are discrete signatures that reflect continuous information.

In \cite{kanari2020trees}, the authors developed their own combinatorial invariant, which are permutations in $\mathfrak{S}_n$, and study the number of combinatorial trees whose barcodes can produce such an invariant. It would be interesting to study similar “inverse counting problems” for our invariants. At this time it not clear which types of filtrations or simplicial complexes give rise to barcodes that will produce nice inverse counting formulae. 

The barcodes lattices are also of interest purely from the perspective of enumerative combinatorics. For instance, at this time we do not have a description of their rank generating functions, M\"{o}bius functions, number of maximal chains, or other classical results on posets.
\linebreak

\noindent \textbf{Ackowledgements} The author gratefully acknowledges partial support from NSF DMS-grant 1818969, NSF TRIPODS Award no. CCF-1934568, and from the NSF-AGEP supplement.

\bibliographystyle{plain}
\bibliography{references__1}

\end{document}